\documentclass[11pt]{amsart}

\usepackage[margin=1in]{geometry}
\usepackage{microtype}
\usepackage{mathtools,amssymb}
\usepackage{booktabs}
\usepackage{enumitem}
\usepackage{graphicx}
\usepackage{tikz}
\usetikzlibrary{arrows.meta,positioning}
\usepackage{xcolor}
\usepackage[pdfusetitle,hidelinks]{hyperref}

\hypersetup{
  colorlinks=true,
  linkcolor=blue,
  citecolor=blue,
  urlcolor=blue,
}

\numberwithin{equation}{section}

\newtheorem{theorem}{Theorem}[section]
\newtheorem{proposition}[theorem]{Proposition}
\newtheorem{lemma}[theorem]{Lemma}
\newtheorem{corollary}[theorem]{Corollary}
\theoremstyle{definition}
\newtheorem{definition}[theorem]{Definition}
\theoremstyle{remark}
\newtheorem{remark}[theorem]{Remark}

\newcommand{\A}{\mathcal A}
\newcommand{\Lcal}{\mathcal L}
\newcommand{\Pal}{\operatorname{Pal}}
\newcommand{\Lext}{\operatorname{Lext}}
\newcommand{\Rext}{\operatorname{Rext}}
\newcommand{\Bext}{\operatorname{Bext}}
\newcommand{\Pext}{\operatorname{Pext}}

\title[Low factor complexity forces palindromic equality]
      {On the number of palindromic factors of low complexity words}

\author{Dong Han Kim}
\address{Department of Mathematics Education, Dongguk University--Seoul,
30 Pildong-ro 1-gil, Jung-gu, Seoul 04620, Republic of Korea}
\email{kim2010@dgu.ac.kr}

\author{Sanghoon Kwon}
\address{Department of Mathematics Education, Catholic Kwandong University,
Gangneung 25601, Republic of Korea}
\email{skwon@cku.ac.kr, shkwon1988@gmail.com}
\date{September 15, 2026}

\hypersetup{
  pdftitle={When Low Factor Complexity Forces Palindromic Equality: The Sharp Three-Halves Threshold},
  pdfauthor={Dong Han Kim and Sanghoon Kwon}
}

\subjclass[2020]{Primary 68R15; Secondary 37B10}
\keywords{factor complexity, palindromic complexity, Rauzy graph,
reversal-closed language, rich word, return word, quasi-Sturmian word,
sharp threshold}

\begin{document}

\begin{abstract}
Factor complexity counts the distinct factors of each length in an
infinite word, while palindromic complexity counts those invariant under
reversal.  For recurrent aperiodic words with reversal-closed language,
the total number of palindromic factors in two consecutive lengths is at
most the one-step growth of factor complexity plus two.  
We prove that equality is forced whenever the factor complexity at a
given length does not exceed three halves of that length plus one.
We construct examples showing that the bound is optimal.
We also obtain an adaptive local criterion and consequences for reversal-closed quasi-Sturmian words and palindromic defect.
\end{abstract}

\maketitle
\tableofcontents

\section{Introduction}

For an infinite word $\mathbf u$, let $p_{\mathbf u}(n)$ denote the number of its
factors of length $n$, and let $\Pal_{\mathbf u}(n)$ denote the number of its
palindromic factors of length $n$.  
General background on these two
complexity functions may be found in~\cite{AllouchePalindrome,Lothaire}.
Factor complexity is a basic quantitative invariant in combinatorics on
words and formal-language theory, and Rauzy graphs encode its local
branching in a form shared with symbolic dynamics; see Rauzy's original
account~\cite{Rauzy}.
Bal\'a\v{z}i, Mas\'akov\'a, and Pelantov\'a \cite{BMP} proved for uniformly recurrent
words that, when the language is closed under reversal,
\begin{equation}\label{eq:classical-bound}
 \Pal_{\mathbf u}(n)+\Pal_{\mathbf u}(n+1)\le \Delta p_{\mathbf u}(n)+2,
 \qquad \Delta p_{\mathbf u}(n):=p_{\mathbf u}(n+1)-p_{\mathbf u}(n).
\end{equation}
Their Rauzy-graph proof does not require uniform recurrence; the
reversal-closed version used here is recorded explicitly in
\cite[Proposition~6]{BPSproof} (see also
\cite[Theorem~1.2(ii)]{BMP}).
The equality case has a well-developed global theory.  In particular,
for a reversal-closed language, equality for every $n$ is equivalent to
the condition that every complete return to a palindrome---a factor
beginning and ending with that palindrome and containing no other
occurrence of it---is itself a palindrome; see Bucci, De Luca, Glen, and
Zamboni~\cite{BDGZ}.  This is part of the theory of rich words, whose
finite factors contain the maximum possible number of distinct
palindromes~\cite{GlenEtAl}.  
Eventual equality is closely tied to finite palindromic defect, the bounded shortfall from this maximum; see
\cite{BPSfinite,BPSproof} and Section~\ref{sec:applications} below.

Our main result is the following.

\begin{theorem}
\label{thm:main}
Let $\mathbf u$ be a recurrent aperiodic infinite word over a finite
alphabet, and suppose that its language is closed under reversal.
Then, for each integer $n\ge0$ satisfying
\begin{equation}\label{eq:main-hyp}
 p_{\mathbf u}(n)\le \frac32 n+1,
\end{equation}
one has
\begin{equation}\label{eq:main-equality}
 \Pal_{\mathbf u}(n)+\Pal_{\mathbf u}(n+1)=p_{\mathbf u}(n+1)-p_{\mathbf u}(n)+2.
\end{equation}
The coefficient $3/2$ is optimal: for every $c>3/2$, there exist an integer $n\ge0$ and a binary (that is, two-letter) uniformly recurrent
aperiodic word $\mathbf v$ with reversal-closed language such that
\[
 p_{\mathbf v}(n)\le cn+1,
 \qquad
 \Pal_{\mathbf v}(n)+\Pal_{\mathbf v}(n+1)
 <\Delta p_{\mathbf v}(n)+2.
\]
\end{theorem}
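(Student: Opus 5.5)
The approach is to reopen the Rauzy-graph proof of \eqref{eq:classical-bound} from \cite{BMP,BPSproof} and keep track of where it loses. Let $\Gamma_n$ be the Rauzy graph of order $n$, with the reversal map acting on it as a graph anti-automorphism, because the language is closed under reversal. Palindromes of length $n$ are the vertices fixed by reversal, and palindromes of length $n+1$ are the edges fixed by reversal. We then contract $\Gamma_n$ to the \emph{reduced Rauzy graph}. Its vertices are the special factors, and its edges are the simple paths between them. The standard argument shows that the deficit
\[
D(n):=\Delta p_{\mathbf u}(n)+2-\Pal_{\mathbf u}(n)-\Pal_{\mathbf u}(n+1)
\]
is governed by a cycle-counting quantity. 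Specifically, $D(n)$ is twice the number of reversal-orbits of pairs $\{e,\bar e\}$ of non-palindromic reduced edges left after removing a reversal-invariant spanning tree. Equivalently, $D(n)>0$ exactly when the quotient of the reduced graph by reversal has a cycle not forced by the Euler-characteristic count $\Delta p+1$. So the first step is to write down precisely this exact formula for $D(n)$, or at least the characterization: $D(n)=0$ iff every pair of reduced edges exchanged by reversal is ``accounted for'' by the branching.

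The second step is the core of the argument: show that a positive deficit forces many vertices in $\Gamma_n$. Suppose $D(n)>0$. Then there is a reduced path $w$ with $w\ne\bar w$ that is not needed to connect the graph. Both $w$ and $\bar w$ are simple paths between special factors. Together with the paths carrying the palindromic structure, they form two ``parallel'' routes of the form $s\to s'$ and $\bar s'\to\bar s$.

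We then use recurrence and aperiodicity, which give strong connectivity and at least one right special and one left special factor. The aim is to show that the two routes, plus at least one further cycle through a palindrome needed for strong connectivity, have total length at least about $\tfrac32 n$. The key tool is that a path of length $\ell$ in $\Gamma_n$ between special factors $s,t$ spells a word of length $n+\ell$ with prefix $s$ and suffix $t$. If $\ell$ is small, $s$ and $t$ overlap, which forces periodicity with period $\ell$.

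Periodicity arguments in the style of Fine--Wilf then give the needed bounds. Two such short cycles with periods $\ell_1,\ell_2$ and $\ell_1+\ell_2\le n$ would force a common period and contradict aperiodicity. The resulting estimate is $p_{\mathbf u}(n)=\#V(\Gamma_n)\ge \tfrac32 n+2$, so hypothesis \eqref{eq:main-hyp} rules out $D(n)>0$.

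I expect this counting step to be the main obstacle. The vertex sets of different cycles may overlap, so one needs a careful case split on the number of special factors. Since $\Delta p_{\mathbf u}(n)\le p_{\mathbf u}(n)/\!\!\sim$ small forces few branchings, the case of two special factors should carry the extremal $\tfrac32$. There, I would aim to show that the three reduced loops have lengths $a,b,c$ with each pair summing to at least $n$ roughly. That gives $a+b+c\ge \tfrac32 n$. The degenerate case $n=0$ and small $n$ would be checked directly.

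For optimality, I would construct, for each large $n$, a binary uniformly recurrent aperiodic word whose Rauzy graph $\Gamma_n$ realizes this extremal configuration. It should have one left special and one right special vertex, joined by three paths, two of which are exchanged by reversal, with lengths close to $n/2$ so that $p(n)\approx \tfrac32 n+O(1)$. A natural choice is a substitutive or $S$-adic word built from blocks $A=u\,0\,\bar u$-type words, where we choose the three return words so the language is reversal-closed but the complete returns are not palindromes. Computing $p(n)$, $p(n+1)$, $\Pal(n)$ and $\Pal(n+1)$ then shows a deficit of $2$ while $p(n)\le (\tfrac32+\varepsilon)n+1$, and letting $\varepsilon\to0$ gives every $c>\tfrac32$.
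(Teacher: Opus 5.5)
Your first step is fine: it is the local tree criterion, Proposition~\ref{prop:tree-criterion}. The gap is in the second step, which carries the entire content of the theorem. You assert that $T_{\mathbf u}(n)>0$ forces $p_{\mathbf u}(n)\ge\frac32n+2$, but you give only a heuristic, and the heuristic targets the wrong configuration.

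Take the case you single out, which is also the one you propose for sharpness. There is a unique right special factor $R$, so $L=\overline R$ is the unique left special factor and $\Gamma'_n$ has the single vertex $[R]$. Then $T_{\mathbf u}(n)>0$ means that some $n$-simple path $P\ne\overline P$ runs from $R$ to $L$. But $P$ and $\overline P$ have the same length $b$, and both spell words of length $n+b$ with prefix $R$ and suffix $L$. If $b\le n$, these words are determined by $R$ and $L$, so $P=\overline P$; this is the argument of Lemma~\ref{lem:overlap}. Hence $b>n$. Since the interiors of $P$ and $\overline P$ are disjoint, $p_{\mathbf u}(n)\ge 2n+1$, and the same holds if $R=L$.

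So this case is nowhere near the threshold. The Rauzy graph you describe for optimality, with two reversal-exchanged paths of length about $n/2$ between $R$ and $\overline R$, cannot exist.

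The genuinely extremal configurations have $T(n)>0$ produced by a cycle through several reversal classes. For $\mathbf u_1$ at $n=5$ one has $\Delta p=4$, seven special vertices, and the square $X,Y,Q,Z$ of Figure~\ref{fig:seed-rauzy}. Your sketch has no mechanism for these. Fine--Wilf bounds for pairs of cycles through a common vertex constrain sums of cycle lengths, but the cycles share segments that $p(n)$ counts only once. Nothing in the sketch turns such bounds into a vertex count of $\frac32 n$.

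You also give no concrete witness word. The paper uses $h_M(\mathbf t)$ with $h_M(0)=a$, $h_M(1)=a^Mb$, at $N_M=2M+3$. There $p=3M+6$, $\Delta p=4$, and $\Pal(N_M)+\Pal(N_M+1)=4$.

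The missing idea is to use the history of $p$ rather than the single level $n$. Pass to the last $m<n$ with $\Delta p(m)=1$. There $\Gamma_m$ consists of two palindromic cycles $U,V$ through a bispecial palindrome $W$, with $|U|=m+k$, $|V|=m+\ell$, and $k+\ell=p(m)+1$. Since $\Delta p\ge2$ on $(m,n]$, the hypothesis $p(n)\le\frac32n+1$ then gives $n\le m+k+\ell$.

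Next, propagate $T=0$ upward from $m$ using the variation formula \eqref{eq:T-variation}. For $0\le d<k+\ell$, every bispecial factor of length $m+d$ is $W\Phi(u^r)$ or $W\Phi(v^s)$. The return-itinerary coding together with Lemma~\ref{lem:unary} gives $b(w)=\#\Pext(w)-1$ for each of them, so $T$ never changes.

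A count done at level $n$ alone cannot see that every increment on this window was at least two. That information is exactly what turns the complexity hypothesis into the bound $n\le m+k+\ell$.
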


In the nontrivial case $\Delta p_{\mathbf u}(n)\ge2$, 
choose the last $m<n$ for which
$\Delta p_{\mathbf u}(m)=1$.
From $m+1$ to $n$ every increment $\Delta p$ is
at least two, whereas aperiodicity gives $p_{\mathbf u}(m) \ge m+1$.  
Then bound \eqref{eq:main-hyp} implies
\begin{equation}\label{nbound}
 n \le p_{\mathbf u}(m) + m+1.
\end{equation}
The $m$-Rauzy graph, defined formally in Section~\ref{sec:preliminaries},
is forced to be the union of two cycles $U$ and $V$ whose only common
vertex is $W$.
We set $|U| = m +k $ and $|V| = m + \ell$.  Then
\[
 k+\ell= p_{\mathbf u}(m) +1.
\]
Thus inequality in~\eqref{eq:main-hyp} places $n$ at or before
$m+k+\ell$, the first length at which a return factor from $W$ to itself can
traverse both cycle types.

The proof uses bilateral order, defined in
Section~\ref{sec:preliminaries},  which
gives the exact variation of the local gap
\[
 T_{\mathbf u}(n):=\Delta p_{\mathbf u}(n)+2-\Pal_{\mathbf u}(n)-\Pal_{\mathbf u}(n+1).
\]
At each length strictly below the endpoint in~\eqref{nbound},
every bispecial return path repeats only one of the two cycles
rather than using both $U$ and $V$.
The bilateral-order contribution of such a factor is exactly cancelled
by its palindromic extensions.  Consequently the successive
variations of $T_{\mathbf u}$ vanish up to and including that endpoint.

The same argument retains more local information than is needed for the constant $3/2$.  
If $q=p_{\mathbf u}(m) \le 2m +1$, the natural local coefficient is
\begin{equation}\label{eq:gamma-intro}
 C_{m,q}:=\frac{3q}{m+1+q}.
\end{equation}
Since $q\ge m+1$, one always has $C_{m,q}\ge3/2$.

The standard inequality~\eqref{eq:classical-bound}, its graph-theoretic
equality criterion, and the global theory of rich and finite-defect words
are prior results~\cite{BMP,BDGZ,BPSfinite,BPSproof,GlenEtAl,Rukavicka}.
Classical background on linear factor complexity and special factors is
given in~\cite{Ca96,Lothaire}.  
Our new assertion
is the one-scale implication~\eqref{eq:main-hyp}--\eqref{eq:main-equality},
its adaptive local form, and the sharpness construction of Section~\ref{sec:sharpness}.
The constant $3/2$ also appears in the different setting of minimal
subshifts generated by uniformly recurrent words, under an asymptotic
complexity-slope hypothesis~\cite{CreutzPavlov}; that work concerns
measure-theoretic structure rather than
palindromic complexity.
Shallit's recent asymptotic comparison between $\Pal_{\mathbf u}(n)$ and $p_{\mathbf u}(n)$~\cite{Shallit2026}
likewise addresses a different regime and does not impose reversal
closure.

Our quasi-Sturmian corollary $\Pal_{\mathbf u}(n)+\Pal_{\mathbf u}(n+1)=3$ and the resulting
finite-defect conclusion should be distinguished from results for
particular morphic or rotation-coding subclasses
\cite{KakoreTapsoba,MassBrlekLabbeVuillon,Starosta}.
The reflection-complexity recurrence obtained in Section~\ref{sec:applications}
is not claimed as a new theorem: it is a special case of the stronger
eventual characterization proved by Dvo\v{r}\'akov\'a and
Pelantov\'a~\cite{DvorakovaPelantova}.

The paper is organized as follows.  
Section~\ref{sec:preliminaries} collects basic properties and derives the $U,V$ two-return structure at the last unit increment of $p_{\mathbf u}(m)$.  
Section~\ref{sec:propagation} develops the return itinerary coding and proves the main theorem.
Section~\ref{sec:sharpness} exhibits the first mixed-return obstruction, first as a concrete Rauzy graph and then
as a stretched Thue--Morse family.  
Finally,
Section~\ref{sec:applications} gives the quasi-Sturmian,
finite-defect, and reflection-complexity consequences.

\section{Basic properties and auxiliary lemmas}
\label{sec:preliminaries}

Throughout Sections~\ref{sec:preliminaries} and \ref{sec:propagation},
$\mathbf u$ is a recurrent aperiodic infinite word over a finite alphabet
$\A$, and its language is closed under reversal.  
A factor is a contiguous finite block; an initial or terminal factor of a finite word is called a prefix or suffix, respectively.  
The empty word is denoted by $\varepsilon$. 
For any alphabet $\mathcal A$, we write $\mathcal A^*$ for the set of
all finite words over $\mathcal A$, including the empty word $\varepsilon$.
For a finite word $w=w_1\cdots w_r$, its length is $|w|=r$ and its
reversal is $\overline{w}=w_r\cdots w_1$.  
The set of finite factors of an infinite word $\mathbf u$ is denoted by $\Lcal(\mathbf u)$, and
$\Lcal_n(\mathbf u)$ is the subset of factors of length $n$; in
particular, $\Lcal_0(\mathbf u)=\{\varepsilon\}$.  Thus
\[
 p_{\mathbf u}(n):=\#\Lcal_n(\mathbf u),
 \quad
 \Pal_{\mathbf u}(n):=\#\{w\in\Lcal_n(\mathbf{u}):w=\overline w\},
 \quad
 \Delta p_{\mathbf u}(n):=p_{\mathbf u}(n+1)-p_{\mathbf u}(n),
\]
and $p_{\mathbf u}(0)=\Pal_{\mathbf u}(0)=1$.  
We suppress $\mathbf u$ from the notation whenever no ambiguity is possible. 

The word $\mathbf u$ is \emph{recurrent} if every factor occurs
infinitely often, and \emph{uniformly recurrent} if the gaps between
successive occurrences of each factor are bounded.  
It is \emph{eventually periodic} if it has the form $xyyy\cdots$ for finite
words $x$ and nonempty $y$, and \emph{aperiodic} otherwise.  
Its language is \emph{closed under reversal} if $w\in\Lcal$ implies
$\overline w\in\Lcal$.  
A factor $w$ is a palindrome if $w=\overline w$.

For $w\in\Lcal (\mathbf u)$, define \emph{extension sets} as 
\begin{align*}
 \Lext_{\mathbf u}(w)&:=\{a\in\A:aw\in\Lcal\},&
 \Rext_{\mathbf u}(w)&:=\{b\in\A:wb\in\Lcal\},\\
 \Bext_{\mathbf u}(w)&:=\{(a,b)\in\A^2:awb\in\Lcal\},&
 \Pext_{\mathbf u}(w)&:=\{a\in\A:awa\in\Lcal\}.
\end{align*}
A factor is left special, right special, or bispecial according as the
corresponding extension sets have cardinalities at least two.  A factor
is called \emph{special} if it is left special or right special.
The \emph{bilateral order} of $w$ is
\begin{equation}\label{eq:bilateral-order}
 b_{\mathbf u}(w):=\#\Bext_{\mathbf u}(w)-\#\Lext_{\mathbf u}(w)-\#\Rext_{\mathbf u}(w)+1.
\end{equation}
For the corresponding extension-graph viewpoint, see~\cite{BertheEtAl}.  

Recurrence ensures that all extension sets are nonempty.  In particular,
every nonbispecial factor has bilateral order zero: if one of its left
or right extension sets is a singleton, its bilateral extensions are in
bijection with the other extension set.  
The standard extension counts are
\begin{align}
 \Delta p_{\mathbf u}(n)
 &=\sum_{w\in\Lcal_n}\bigl(\#\Rext_{\mathbf u}(w)-1\bigr),
 \label{eq:first-difference}\\
 \Delta p_{\mathbf u}(n+1)-\Delta p_{\mathbf u}(n)
 &=\sum_{w\in\Lcal_n}b(w),
 \label{eq:second-difference}\\
 \Pal_{\mathbf u}(n+2)-\Pal_{\mathbf u}(n)
 &=\sum_{\substack{w\in\Lcal_n\\w=\overline w}}
       \bigl(\#\Pext_{\mathbf u}(w)-1\bigr).
 \label{eq:pal-difference}
\end{align}
See \cite{Ca97} for the first and second equalities.
The last identity follows because deleting the first and last letters from a palindrome of length $n+2$ produces a unique palindrome of length $n$.

\begin{definition}\label{def:local-gap}
The \emph{local palindromic gap at length $n$} is
\begin{equation}\label{eq:T-def}
 T_{\mathbf u}(n):=\Delta p_{\mathbf u}(n)+2-\Pal_{\mathbf u}(n)-\Pal_{\mathbf u}(n+1).
\end{equation}
\end{definition}

This local quantity should not be confused with the palindromic defect
$D(\mathbf u)$ recalled in Section~\ref{sec:applications}. 
The former measures failure of equality at one length; the latter is a global
property of the language.

For recurrent words with reversal-closed language, the classical
inequality~\eqref{eq:classical-bound} says precisely that $T_{\mathbf u}(n)\ge0$.
Moreover, $T_{\mathbf u}(n)$ is even: reversal partitions the nonpalindromic factors
of each length into two-element orbits, so
$p_{\mathbf u}(n)\equiv\Pal_{\mathbf u}(n)\pmod 2$, and similarly at length $n+1$.
Subtracting~\eqref{eq:T-def} at consecutive orders and using
\eqref{eq:second-difference}--\eqref{eq:pal-difference} gives the local variation formula
\begin{equation}\label{eq:T-variation}
 T_{\mathbf u}(n+1)-T_{\mathbf u}(n)
 =\sum_{w\in\Lcal_n(\mathbf u)}b_{\mathbf u}(w)
  -\sum_{\substack{w\in\Lcal_n (\mathbf u)\\w=\overline w}}
       \bigl(\#\Pext_{\mathbf u}(w)-1\bigr).
\end{equation}
This identity is the point at which bilateral order enters the proof.

\begin{lemma}
\label{lem:unary}
Let $\mathbf z$ be a recurrent word over $\{u,v\}$ with
reversal-closed language.  
For every $r\ge0$ such that $u^r$ is a factor of $\mathbf z$,
\begin{equation}\label{eq:unary-identity}
 b_{\mathbf z}(u^r)=\#\Pext_{\mathbf z}(u^r)-1.
\end{equation}
By symmetry, the same identity holds for $v^r$. 
\end{lemma}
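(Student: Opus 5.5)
We work directly with the four extension sets of $u^r$ over the binary alphabet $\{u,v\}$ and compare the two sides of \eqref{eq:unary-identity} by cases. Write $L=\Lext_{\mathbf z}(u^r)$, $R=\Rext_{\mathbf z}(u^r)$, $B=\Bext_{\mathbf z}(u^r)$ and $P=\Pext_{\mathbf z}(u^r)$.

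\emph{Reversal symmetry.} Since $u^r$ is a palindrome and the language is closed under reversal, $a\,u^r b\in\Lcal$ if and only if $b\,u^r a\in\Lcal$. Hence $B$ is symmetric under $(a,b)\mapsto(b,a)$, and $L=R$. Recurrence makes $L$ and $R$ nonempty, and $P=\{a:(a,a)\in B\}$.

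\emph{Non-bispecial case.} If $u^r$ is not bispecial, then by $L=R$ the set $L=R=\{a\}$ is a singleton. Then $B=\{(a,a)\}$, so $b=1-1-1+1=0$. Also $P=\{a\}$, so $\#P-1=0$, and the identity holds.

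\emph{Bispecial case.} Here $L=R=\{u,v\}$, so $b=\#B-3$. The set $B$ is a subset of $\{u,v\}^2$. It must cover both left letters and both right letters, and it is symmetric.

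\emph{Counting $B$.} The key extra input is that $u^{r+2}$ and $v\,u^r v$ cannot both be absent, and we must control which pairs occur. The mixed pairs $(u,v)$ and $(v,u)$ are forced. Indeed, $v\in L$ gives some $v\,u^r b\in\Lcal$. If $b=v$ only, then consider the right extensions of $u^{r+1}$. Since $u\in R$, we have $u^{r+1}\in\Lcal$, and some $x\,u^{r+1}\in\Lcal$ exists. I would argue directly: $u^{r+1}$ is a factor, and by recurrence and aperiodicity (a recurrent word equal to $u^\omega$ is excluded unless $\mathbf z$ contains $v$, which it does since $v\in L$), the maximal runs of $u$ containing $u^{r+1}$ are finite and bounded by $v$ on at least one side. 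A run $u^s$ with $s\ge r+1$ bordered by $v$ yields the factor $v\,u^{r}u$ or $u\,u^r v$. By symmetry both mixed pairs lie in $B$.

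\emph{Conclusion of the count.} So $B\supseteq\{(u,v),(v,u)\}$. Then $\#B=2+\#P$, since the diagonal pairs of $B$ are exactly $P$. This gives $b=\#P-1$, as claimed. The same argument with $u$ and $v$ swapped handles $v^r$.

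The main obstacle is the mixed-pair step. One must verify that a run of $u$'s of length at least $r+1$ really meets a $v$ on some side. This needs care for runs at the very beginning of $\mathbf z$, and it is handled by recurrence: use a later occurrence of $u^{r+1}$, which is preceded by some letter. One must also treat the degenerate case $r=0$ with the same bookkeeping, where $u^0=\varepsilon$ and $L=R=\{u,v\}$ whenever both letters occur.
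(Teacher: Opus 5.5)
Your proof is correct and is essentially the paper's argument: you split into the non-bispecial and bispecial cases, and in the bispecial case a finite maximal $u$-run of length at least $r+1$ (followed by $v$) forces the mixed pairs $(u,v),(v,u)$ into $\Bext_{\mathbf z}(u^r)$, so the remaining (diagonal) pairs are exactly $\Pext_{\mathbf z}(u^r)$; your use of reversal symmetry ($\Lext_{\mathbf z}(u^r)=\Rext_{\mathbf z}(u^r)$, with $\Bext_{\mathbf z}(u^r)$ symmetric) streamlines the non-bispecial case and gives the second mixed pair for free. Two remarks, neither affecting the logic: your ``key extra input'' that $u^{r+2}$ and $vu^rv$ cannot both be absent is false and should be deleted (in $(u^{r+1}v)^\omega$ one has $\Bext_{\mathbf z}(u^r)=\{(u,v),(v,u)\}$), and aperiodicity is not a hypothesis of the lemma---you only need that $v$ occurs, hence recurs by recurrence, so every maximal $u$-run is finite and is followed by $v$.
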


\begin{proof}
If only one of the letters $u,v$ occurs in $\mathbf z$, the identity
is immediate.  Otherwise, recurrence of $\mathbf z$ implies
that both $u^rv$ and $vu^r$ occur.
If $u^r$ is not bispecial, then $u^{r+1}$ does not occur: indeed,
a finite maximal run of at least $r+1$ consecutive copies of $u$ would
give both left and both right extensions of $u^r$.
Therefore, 
$$
\Lext_{\mathbf z}(u^r) = \Rext_{\mathbf z}(u^r) = \Pext_{\mathbf z}(u^r) = \{ v \}, \quad \Bext_{\mathbf z}(u^r) = \{ (v,v)\}$$ and 
$$b_{\mathbf z}(u^r)= \#\Bext_{\mathbf z}(u^r) - \#\Lext_{\mathbf z}(u^r) - \#\Rext_{\mathbf z}(u^r) +1 = 0 = \#\Pext_{\mathbf z}(u^r)-1.
$$

Suppose that $u^r$ is bispecial.
Then $u^{r+1}$ occurs and so do $u^{r+1}v$, $vu^{r+1}$.  
$$ \Lext_{\mathbf z}(u^r) = \Rext_{\mathbf z}(u^r) = \{ u, v \}$$
and
$$
\Bext_{\mathbf z}(u^r) = \{  (x, x) : x \in \Pext_{\mathbf z}(u^r)  \}  \cup \{ (u,v), (v,u) \}.
$$
Therefore, we have 
\begin{align*}
b_{\mathbf z}(u^r) &= \#\Bext_{\mathbf z}(u^r) - \#\Lext_{\mathbf z}(u^r) - \#\Rext_{\mathbf z}(u^r) +1 \\
&= 2 + \#\Pext_{\mathbf z}(u^r) -2 -2 +1 
= \#\Pext_{\mathbf z}(u^r)-1. \qedhere
\end{align*}
\end{proof}

We shall also use the $n$-Rauzy graph $\Gamma_n(\mathbf u)$~\cite{Rauzy}.  
Vertices of $\Gamma_n(\mathbf u)$ are the factors in $\Lcal_n(\mathbf u)$. 
There is an edge from $u$ to $v$ in $\Gamma_n$ if and only if
\[
ux=yv\in\mathcal L_{n+1}(\mathbf u)
\]
for some letters $x$ and $y$.
In this case, we write
$$[uv] = ux = yv.$$
A finite sequence of edges $[u_1u_2]$, $[u_2u_3]$, \dots, $[u_{k-1}u_k]$ is called a path and we write
it as $[u_1u_2 \cdots u_{k-1}u_k]$. 
A factor of length $n + \ell$ induces a path of length $\ell$ in $\Gamma_n$. 
Moreover, a path of length $\ell$ in $\Gamma_{n+k}$ induces a path of length $\ell+k$ in $\Gamma_n$. 
However, the converse does not hold in general.
An \emph{$n$-simple path} is a path in $\Gamma_n$  whose length-$n$ prefix and suffix are special and which contains no other special factor of length $n$.  
Reversal sends an $n$-simple path from $v$ to $w$ to an $n$-simple path from $\overline w$ to
$\overline v$.
Recurrence makes $\Gamma_n$ strongly connected.  
We also define the reduced Rauzy graph $\Gamma'_n(\mathbf u)$ as follows.
Its vertices are the reversal classes $[w]:=\{w,\overline w\}$, where
$w\in\mathcal L_n(\mathbf u)$ is right or left special.  Two vertices
$[w]$ and $[v]$ are joined by an edge class
$[e]:=\{e,\overline e\}$ if $e$ or $\overline e$ is an $n$-simple path
starting at $w$ or $\overline w$ and ending at $v$ or $\overline v$.

We record the standard local equality criterion in the form needed below;
see~\cite[Proposition~6 and Lemma~7]{BPSproof}.

\begin{proposition}[Local tree criterion]
\label{prop:tree-criterion}
Fix $n$ and consider the reduced Rauzy graph $\Gamma'_n(\mathbf u)$. 
Then $T_{\mathbf u}(n)=0$
if and only if
\begin{enumerate}[label=\textup{(\roman*)}]
 \item The graph obtained from the reduced Rauzy graph $\Gamma'_n(\mathbf u)$ by removing loops is a tree; and
 \item 
Any $n$-simple path forming a loop in the reduced Rauzy graph $\Gamma'_n(\mathbf u)$ is a palindrome. 
\end{enumerate}
\end{proposition}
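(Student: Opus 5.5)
The plan is to write $T_{\mathbf u}(n)$ as twice a cycle-rank type quantity of the reduced Rauzy graph $\Gamma'_n(\mathbf u)$ minus a correction counting palindromic loops. Both (i) and (ii) will then be read off from the vanishing of nonnegative terms.

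\emph{Step 1: the compressed graph.} Let $G_n$ have as vertices the special factors of length $n$ and as edges the $n$-simple paths. Aperiodicity guarantees that special factors of length $n$ exist. Recurrence makes $\Gamma_n$, and hence $G_n$, strongly connected. Write $V$ and $E$ for the numbers of vertices and edges of $G_n$. Each special vertex $w$ has out-degree $\#\Rext(w)$ in $G_n$. Every non-special factor lies in the interior of exactly one simple path. Hence \eqref{eq:first-difference} gives
\[
\Delta p(n)=\sum_{w\text{ special}}\bigl(\#\Rext(w)-1\bigr)=E-V .
\]

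\emph{Step 2: counting palindromes.} Reversal is an involution of $G_n$ that reverses edge direction. Let $P_v$ be the number of palindromic special vertices and $P_e$ the number of simple paths $e$ with $e=\overline e$. I will show that
\[
\Pal(n)+\Pal(n+1)=P_v+P_e .
\]
\begin{itemize}
\item A palindrome of length $n$ that is special is counted in $P_v$.
\item A non-special palindrome $w$ lies inside a unique simple path $e$. Then $\overline e$ also passes through $\overline w=w$, so $e=\overline e$ and $w$ is its central vertex.
\item A palindrome $x$ of length $n+1$ is an edge of $\Gamma_n$ from its length-$n$ prefix $y$ to $\overline y$. Since $y$ and $\overline y$ are simultaneously special or not, this edge lies in a unique simple path $e$, and again $e=\overline e$ with $x$ as its central edge.
\item Conversely, every reversal-invariant simple path has exactly one centre. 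This centre is either a middle vertex, giving a palindrome of length $n$, or a middle edge, giving one of length $n+1$. The palindromic endpoints of the path are already accounted for in $P_v$ and are not double counted.
\end{itemize}
I expect this bijective bookkeeping to be the main obstacle. Degenerate short paths need particular care, for instance a path of length $1$ from $w$ to $\overline w$, or a path whose both endpoints are palindromes.

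\emph{Step 3: passing to $\Gamma'_n$.} Reversal classes give
\[
V'=\tfrac12(V+P_v), \qquad E'=\tfrac12(E+P_e).
\]
Substituting these into $E-V$ yields
\[
T(n)=E-V+2-P_v-P_e=2\bigl(E'-V'+1\bigr)-2P_e .
\]
The graph $\Gamma'_n$ is connected. Split $E'$ as $L$ loops plus the non-loop edges. Then $E'-V'+1=\beta+L$, where $\beta\ge0$ is the cycle rank of the loopless multigraph; multiple edges contribute to $\beta$. A reversal-invariant simple path runs from $w$ to $\overline w$, so it is a loop at $[w]$. Hence $P_e\le L$, and
\[
\tfrac12T(n)=\beta+(L-P_e),
\]
a sum of two nonnegative terms. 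This also recovers \eqref{eq:classical-bound}. Thus $T(n)=0$ if and only if $\beta=0$, which means the loopless graph is a tree (condition (i)), and $L=P_e$, which means every loop class is represented by a palindromic simple path (condition (ii)).
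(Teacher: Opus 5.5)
Your proof is correct and follows essentially the same route as the paper. The paper gives no argument of its own and cites Proposition~6 and Lemma~7 of Balkov\'a, Pelantov\'a and Starosta; the standard proof behind that citation is the same counting you carry out: $\Delta p(n)=E-V$, then $\Pal(n)+\Pal(n+1)=P_v+P_e$ via centres of reversal-invariant simple paths, then passage to reversal classes to get $\tfrac12 T_{\mathbf u}(n)=\beta+(L-P_e)$, from which (i) and (ii) are exactly the two vanishing conditions. The only facts you leave implicit are routine consequences of determinism at non-special vertices: every vertex and edge of $\Gamma_n$ lies on a unique $n$-simple path, and the interior vertices and edges of such a path are pairwise distinct. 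These are what make the centre of a reversal-invariant path unique.
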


The following elementary overlap observation will be used twice.

\begin{lemma}
\label{lem:overlap}
Let $W$ be a word of length $m \ge 0$.
Let $U$ and $V$ be words of length $n$ with $m < n \le 2m+1$.  
If both $U$ and $V$ have prefix $W$ and suffix $\overline W$ and $U = \overline V$,
then $U = V$. 
\end{lemma}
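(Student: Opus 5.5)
The idea is to show that $U$ is itself a palindrome; then $V=\overline U=U$ follows at once. So we only need to compare $U$ with its reversal, and most of the hypothesis on $V$ is not used beyond the relation $U=\overline V$.

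\emph{Positional reformulation.} Write $U=U_1\cdots U_n$ and $W=W_1\cdots W_m$. Since $W$ is a prefix of $U$, we have $U_i=W_i$ for $1\le i\le m$. Since $\overline W$ is a suffix of $U$, the last $m$ letters of $U$ are $W_m,\dots,W_1$, so $U_{n+1-i}=W_i$ for $1\le i\le m$. These two windows are well defined because $m<n$. Comparing them gives
\[
 U_i=U_{n+1-i}\qquad (1\le i\le m).
\]
The same relation holds for every $i$ with $n+1-i\le m$, by applying it to the index $n+1-i$.

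\emph{Covering all positions.} Let $1\le i\le n$, and suppose neither $i\le m$ nor $n+1-i\le m$ holds. Then $i\ge m+1$ and $n+1-i\ge m+1$, which gives $n\ge 2m+1$. By the hypothesis $n\le 2m+1$ we then have $n=2m+1$ and $i=m+1$, which is the central position. There $i=n+1-i$, so the condition $U_i=U_{n+1-i}$ holds trivially. Hence $U_i=U_{n+1-i}$ for all $i$, that is, $U=\overline U$.

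\emph{Conclusion.} From $U=\overline V$ we get $V=\overline U=U$. None of the steps is a real obstacle. The only point needing care is the boundary case $n=2m+1$, where exactly one central letter is not covered by the prefix and suffix windows. This is precisely why the bound $n\le 2m+1$ is sharp for the argument: when $n\ge 2m+2$, two or more middle letters are unconstrained.
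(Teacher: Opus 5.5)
Your proof is correct and rests on the same observation as the paper's: the prefix $W$ and the suffix $\overline W$ fix every position, except the central letter when $n=2m+1$. The only difference is organizational: the paper compares $U$ and $V$ directly, getting $U=V$ immediately for $n\le 2m$, and for $n=2m+1$ writing $U=Wa\overline W$, $V=Wb\overline W$ and deducing $a=b$ from $U=\overline V$; you instead first show $U=\overline U$, and so never need the prefix/suffix hypothesis on $V$.
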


\begin{proof}
If $m < n \le 2m$, then all letters of $U$, $V$ are determined by the prefix $W$ and suffix $\overline W$, thus $U=V$.
If $n = 2m+1$, then $U = Wa \overline W$, $V = Wb \overline W$ for some letters $a$, $b$.
Since $U = \overline V$, we have $a = b$, thus $U=V$.
\end{proof}

\begin{lemma}
\label{lem:unit}
Suppose that $\Delta p_{\mathbf u}(m)=1$.
Then there exists a palindrome $W$, which is either an $m$-simple path or a bispecial factor of length $m$,
and there are two distinct $m$-simple paths $U$, $V$. 
Taken together, $U$, $V$, and $W$ contain every nonspecial vertex
of the Rauzy graph $\Gamma_m(\mathbf u)$ exactly once.
Moreover, if we assume further that $p_{\mathbf u}(m)\le 2m +2$,  then $U$, $V$ are palindromes and 
$T_{\mathbf u}(m)=0$.
\end{lemma}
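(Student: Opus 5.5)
The plan is to read the shape of $\Gamma_m(\mathbf u)$ off the extension counts and then use reversal closure. The three steps are: find the unique special vertices, decompose the graph into $W$, $U$, $V$, and then prove the palindromicity and $T_{\mathbf u}(m)=0$.

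\emph{Special vertices.} By \eqref{eq:first-difference}, $\Delta p_{\mathbf u}(m)=1$ forces a unique right special factor $R\in\Lcal_m$, with exactly two right extensions. The symmetric left-extension count also equals $\Delta p_{\mathbf u}(m)$, so there is a unique left special factor $L$, with exactly two left extensions. Reversal closure exchanges left and right special factors, so uniqueness gives $L=\overline R$. In $\Gamma_m$ every nonspecial vertex has in- and out-degree one. The vertex $R$ has out-degree two and $L$ has in-degree two; the remaining degrees are one.

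\emph{Decomposition of $\Gamma_m$.} Split according to whether $L=R$.
\begin{itemize}
 \item If $L=R$, set $W:=R$. It is bispecial, and it is a palindrome since $R=\overline R$.
 \item If $L\ne R$, the unique edge leaving $L$ starts a deterministic walk through nonspecial vertices. This walk must end at a special vertex. It cannot return to $L$ before meeting $R$, because $L$ is not right special and an $L$-to-$L$ cycle avoiding $R$ would break strong connectivity. So it is the unique $m$-simple path $W$ from $L$ to $R$. Reversal sends $W$ to an $m$-simple path from $\overline R=L$ to $\overline L=R$, which must be $W$ itself by uniqueness; hence $W$ is a palindrome.
\end{itemize}
In both cases the two edges leaving $R$ start two deterministic walks. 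Each ends at the first special vertex met, which must be $L$ (when $L=R$ it is $R$ itself). These are two distinct $m$-simple paths $U$ and $V$ from $R$ to $L$. Every nonspecial vertex has in- and out-degree one and lies on a cycle through a special vertex, by strong connectivity. Hence it lies on exactly one of $U$, $V$, $W$.

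\emph{Palindromicity of $U$ and $V$.} Reversal maps $U$ to an $m$-simple path from $\overline L=R$ to $\overline R=L$, so $\overline U\in\{U,V\}$. Suppose $\overline U=V$. Then the two words have equal length, say $m+\ell$, and both have prefix $R$ and suffix $\overline R$.

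Count vertices. Each of the paths $U$ and $V$ has $\ell-1$ interior vertices, so
\[
 p(m)\ge 2(\ell-1)+1 .
\]
If $p(m)\le 2m+2$, this gives $\ell\le m+1$, so the word length $m+\ell$ satisfies $m<m+\ell\le 2m+1$. Lemma~\ref{lem:overlap}, applied with $R$ in the role of its $W$, then gives $U=V$ as words. Two paths in $\Gamma_m$ with the same word coincide, which contradicts $U\ne V$. Hence $\overline U=U$, and likewise $\overline V=V$.

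\emph{Vanishing of $T_{\mathbf u}(m)$.} Since $L=\overline R$, the reduced graph $\Gamma'_m$ has the single vertex $[R]$. Every edge class $[U]$, $[V]$, and $[W]$ when $L\ne R$, is a loop. After removing loops we are left with a single vertex, which is a tree. All loops are palindromes, so Proposition~\ref{prop:tree-criterion} yields $T_{\mathbf u}(m)=0$.

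The expected delicate point is the vertex-counting bound $\ell\le m+1$, together with checking that the $m$-simple paths really terminate at $L$ rather than cycling elsewhere; both follow from the in/out-degree bookkeeping and strong connectivity.
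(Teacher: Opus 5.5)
Your proof is correct and follows the paper's argument essentially step for step. Both identify the unique special factors $R$ and $L=\overline R$ and split on $L=R$ versus $L\ne R$, using uniqueness to force $\overline W=W$. Both then rule out $\overline U=V$ via Lemma~\ref{lem:overlap} and finish with the tree criterion. Your vertex count $p_{\mathbf u}(m)\ge 2\ell-1$ is just a repackaging of the paper's bound $|U|+|V|\le 2m+p_{\mathbf u}(m)+1\le 4m+3$. You also give slightly more detail than the paper on why the walks end at $L$ and why $\Gamma'_m$ reduces to one vertex carrying palindromic loops.
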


\begin{proof}
If $p_{\mathbf u}(m+1) = p_{\mathbf u}(m)+1$,
then by~\eqref{eq:first-difference}, there exists a unique right-special word of length $m$ and a unique left-special word of length $m$.
We denote by $w^*$ and ${}^*w$ the right-special and the left-special words of length $m$, respectively.
Reversal gives a unique left-special factor ${}^*w =\overline{w^*}$, with two left extensions.
We note that every vertex other than $w^*$ has one outgoing edge, and every vertex other than ${}^*w$ has one incoming edge.  

Assume first that $w^*\ne {}^*w$. 
Since $\mathbf u$ is recurrent, there exists an $m$-simple path $W$ from ${}^*w$ to $w^*$ in the Rauzy graph $\Gamma_m$. 
We write $W = [{}^*w w_1 w_2 \cdots w_{r-1} w^*]$ for $r \ge 1$.
By the uniqueness of the path from ${}^*w$ to $w^*$, we have $W = \overline W$.
Since $w^*$ has two out-going edges and ${}^*w$ has two incoming edges, we have two distinct $m$-simple paths
$U= [ w^* u_1 u_2 \dots u_{k-1} {}^*w]$ and $V = [ w^* v_1 v_2 \dots v_{\ell-1} {}^*w]$.
If $w^* = {}^*w (=: w)$, then we put  
$W = w$ and let
$U= [ w u_1 u_2 \dots u_{k-1} w]$, $V = [ w v_1 v_2 \dots v_{\ell-1} w]$ be the two distinct $m$-simple cycles.
We note that $u_1, \dots, u_{k-1}, v_1, \dots v_{\ell-1}, w_1, \dots, w_{r-1}$ are all distinct. Thus, $p_{\mathbf u} (m) = k+ \ell +r-1$ for $w^*\ne {}^*w$ and $p_{\mathbf u} (m) = k+ \ell -1$ for $w^*={}^*w$.
Therefore, we have
\begin{equation}\label{eq2.7}
|U| + |V| = m +k + m +\ell \le 2m + p_{\mathbf u}(m) + 1 \le 4m + 3. 
\end{equation}

We note that $\overline U$ and $\overline V$ are also paths from $\overline{{}^*w} =w^*$ to $\overline{w^*} = {}^*w$.
Since there are only two outgoing edges from $w^*$ and two incoming edges to ${}^*w$, we have 
either $U = \overline U$, $V = \overline V$ or $U = \overline V$. 
Suppose $U = \overline V$. 
Then $|U| = |V|$ and \eqref{eq2.7} implies that
$$|U| = |V| \le 2m + 1.$$ 
By Lemma~\ref{lem:overlap}, we deduce that $U = V$,
which is a contradiction.
Therefore, $U = \overline U$, $V = \overline V$.
Proposition~\ref{prop:tree-criterion} yields $T_{\mathbf u}(m)=0$.
\end{proof}

We next describe what happens when an increment of one in $p_{\mathbf u}(m)$ is followed by a larger increment.  

\begin{lemma}
\label{lem:strong-transition}
Let $m\ge0$ and suppose that
\[
 \Delta p_{\mathbf u}(m)=1,
 \qquad
 \Delta p_{\mathbf u}(m+1)\ge2.
\]
Then the unique right-special and left-special factors of length $m$
coincide in a palindrome $W$.  Moreover,
\[
 \#\Lext(W)=\#\Rext(W)=2,
 \qquad
 \#\Bext(W)=4,
 \qquad
 b(W)=1,
\]
and $\Delta p_{\mathbf u}(m+1)=2$.
\end{lemma}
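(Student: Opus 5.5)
Since $\Delta p(m)=1$, the count \eqref{eq:first-difference} gives a unique right special factor $w^*\in\Lcal_m$, with exactly two right extensions. Reversal closure then gives a unique left special factor ${}^*w=\overline{w^*}$, also with exactly two left extensions. The plan is to combine this with \eqref{eq:second-difference}:
\[
 \Delta p(m+1)-\Delta p(m)=\sum_{w\in\Lcal_m} b(w).
\]
The hypothesis $\Delta p(m+1)\ge2$ forces this sum to be at least $1$. Every nonbispecial factor has bilateral order zero, as noted after \eqref{eq:bilateral-order}. So some factor of length $m$ must be bispecial.

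\textbf{Step 1: $w^*={}^*w$.} A bispecial factor of length $m$ is both right special and left special. By uniqueness it equals $w^*$ and also ${}^*w$. Hence $w^*={}^*w=:W$, and $W=\overline{w^*}=\overline W$, so $W$ is a palindrome.

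\textbf{Step 2: extension counts.} We have $\#\Lext(W)=\#\Rext(W)=2$. Each left extension $a\in\Lext(W)$ admits at least one right extension by recurrence, and similarly in the other direction. Hence
\[
 2\le\#\Bext(W)\le 4,
 \qquad\text{so}\qquad
 b(W)=\#\Bext(W)-3\le 1.
\]
Since $b(W)$ is the only possibly nonzero term in the sum, Step 1 and the hypothesis give $b(W)=\Delta p(m+1)-1\ge1$. Therefore $b(W)=1$, $\#\Bext(W)=4$, and $\Delta p(m+1)=\Delta p(m)+b(W)=2$.

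\textbf{Main point.} The only real content is recognising that positivity of the second difference forces the unique right special and unique left special factors to coincide. This rests on the uniqueness coming from $\Delta p(m)=1$ together with reversal closure. One small check is needed at $m=0$: there $W=\varepsilon$ is trivially a palindrome, and the alphabet has exactly two letters, so the same computation applies verbatim. Everything else is bookkeeping with \eqref{eq:first-difference} and \eqref{eq:second-difference}.
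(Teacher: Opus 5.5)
Your proposal is correct and follows the paper's proof essentially step for step. Both arguments get uniqueness of the right- and left-special factors from $\Delta p(m)=1$ and reversal closure, then use the second-difference formula to force a bispecial factor, hence $w^*={}^*w$. Both then finish with the bound $b(W)=\#\Bext(W)-3\le 1$, which gives $b(W)=1$, $\#\Bext(W)=4$ and $\Delta p(m+1)=2$.
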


\begin{proof}
As in the proof of Lemma~\ref{lem:unit}, there is a unique right-special factor $w^*$ and a unique left-special factor ${}^*w = \overline{w^*}$.  
If $w^* \ne {}^*w$, no length-$m$ factor is bispecial.  
Every summand in \eqref{eq:second-difference} is then zero, contradicting $\Delta p(m+1)>\Delta p(m)$.  
Hence $w^* = {}^*w =:W$, and $W=\overline W$.
All factors other than $W$ have bilateral order zero.  
Since $W$ has two left and two right extensions,
\[
 b(W)=\#\Bext(W)-3\le1.
\]
Equation~\eqref{eq:second-difference} and $\Delta p(m+1)>\Delta p(m)$ force
$b(W)=1$, so all four bilateral extensions occur and
$\Delta p_{\mathbf u}(m+1)=2$.
\end{proof}

\section{Proof of the main theorem}
\label{sec:propagation}

In the preceding section, we showed that if $\Delta p_{\mathbf u}(m)=1$ and $\Delta p_{\mathbf u}(m+1)\ge2$, then the Rauzy graph $\Gamma_m(\mathbf u)$ is the union of two distinct
$m$-simple cycles $U$ and $V$ whose only common vertex is the
bispecial palindrome $W$.
Let $W=w$, $U = [wu_1\dots u_{k-1}w]$, $V = [w v_1 \dots v_{\ell-1} w]$ as in the proof of Lemma~\ref{lem:unit}.
Then 
\begin{equation}\label{eq:cycle-sum}
 p_{\mathbf u}(m) = k+\ell - 1, \qquad |U|=m+k, \quad |V|= m+\ell. 
\end{equation}
For $X, Y  \in \{ U,V\}$, concatenating two cycles $X  = [w x_1\dots x_{s-1}w]$ and $Y= [w y_1\dots y_{t-1} w]$ in the Rauzy graph $\Gamma_m(\mathbf u)$ gives the factor 
$$W \tilde X \tilde Y = X \tilde Y = [w x_1\dots x_{s-1} w y_1\dots y_{t-1}w],$$
where $\tilde U$ and $\tilde V$ are finite words satisfying 
$$U = W \tilde U, \quad V = W \tilde V. $$
Let $\Phi$ be the morphism from $\{ u, v\}^*$ to $\mathcal A^*$ given by $\Phi(u) = \tilde U$, $\Phi(v) = \tilde V$.
By the concatenated cycles in $\Gamma_m(\mathbf u)$, we mean a factor 
 $$
F(y) = W \Phi(y) \in \mathcal L (\mathbf u),
$$
where $y$ is a finite word over the two-letter alphabet $\{u,v\}$; it will be called a \emph{return itinerary}. 
We note that $F(\varepsilon) = W$.

\begin{lemma}
\label{lem:extension-transfer}
Suppose that the Rauzy graph $\Gamma_m (\mathbf u)$ consists of a bispecial palindromic vertex $W$ and two distinct $m$-simple palindromic cycles $U$, $V$ from $W$ to itself. 
Choose an occurrence of $W$ in $\mathbf u$.  Reading successive returns
to $W$, code the cycle $U$ by $u$ and the cycle $V$ by $v$, and let
$\mathbf z$ be the resulting infinite word.  Let $y$ be a factor of
$\mathbf z$.

Then
\begin{enumerate}[label=\textup{(\roman*)}]
 \item $\mathbf z$ is recurrent and its language is closed under reversal;
 \item for every itinerary factor $y$,
 \begin{equation}\label{eq:return-reversal}
   \overline{F(y)}=F(\overline y);
 \end{equation}
 \item the bilateral extensions of $y$ in $\mathbf z$ are in bijection with the bilateral extensions of $F(y)$ in $\mathbf u$.  
 This bijection preserves equal left and right letters.  Consequently,
 \begin{equation}\label{eq:extension-transfer}
   b_{\mathbf u}(F(y))=b_{\mathbf z}(y),
 \end{equation}
 and, whenever $y$ is a palindrome,
 \begin{equation}\label{eq:pal-extension-transfer}
   \#\Pext_{\mathbf u}(F(y))=
   \#\Pext_{\mathbf z}(y).
 \end{equation}
\end{enumerate}
If all four bilateral extensions of $W$ occur, then $uu$, $uv$, $vu$,
and $vv$ all occur in $\mathbf z$.
\end{lemma}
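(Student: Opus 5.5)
The plan is to make the return coding into a genuine dictionary between factors of $\mathbf u$ that begin and end with $W$ and factors of $\mathbf z$. First I will check that the coding is well defined. The graph $\Gamma_m(\mathbf u)$ consists only of the two cycles $U,V$ meeting at $W$. So along $\mathbf u$, after the chosen occurrence of $W$, the path in $\Gamma_m$ leaves $W$ along one of the two cycles and is then forced to return to $W$ after $k$ or $\ell$ steps. Hence $\mathbf u$ (from that occurrence on) equals $W\Phi(\mathbf z)$. The key structural fact I will isolate is \emph{unique decoding}: every factor of $\mathbf u$ that begins and ends with $W$ is $F(y)$ for exactly one $y$, and $y$ is a factor of $\mathbf z$. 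Existence holds because the occurrences of $W$ inside it are exactly the visits of its Rauzy path to the vertex $W$, since $W$ is the only vertex where the path can branch. Uniqueness holds because $\tilde U$ and $\tilde V$ determine distinct first edges out of $W$. Recurrence of $\mathbf u$ then gives every such factor an occurrence beyond the chosen starting point, so it really lies in $W\Phi(\mathbf z)$.

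For (i) and (ii), I will first prove \eqref{eq:return-reversal} on the letters. Since $U$ and $V$ are palindromes, $\overline{W\tilde U}=W\tilde U$, and the same holds for $V$. An induction on $|y|$ then gives $\overline{W\Phi(y)}=W\Phi(\overline y)$. Concretely, $\overline{F(yx)}$ starts with $\overline{X}=X=W\tilde X$ and continues with the reversal of $F(y)$ with its leading $W$ removed. This overlap bookkeeping, $F(yx)=F(y)\tilde X$ and $F(xy)=X\,\Phi(y)$, is the only place where the computation needs care. Recurrence of $\mathbf z$ follows from recurrence of $\mathbf u$ together with unique decoding: $F(y)$ recurs in $\mathbf u$, and each occurrence decodes to an occurrence of $y$ in $\mathbf z$. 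Reversal closure works the same way: $\overline{F(y)}\in\Lcal(\mathbf u)$ begins and ends with $W$, so it equals $F(y')$ for a unique factor $y'$, and by (ii) $y'=\overline y$.

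For (iii), I will send a bilateral extension $(x,x')$ of $y$ in $\mathbf z$, with $x,x'\in\{u,v\}$, to $(a,b)$. Here $a$ is the letter preceding $W$ on the cycle $X$, namely the last letter of $\tilde X$, or the letter of $X$ before its final $W$. The letter $b$ is the first letter of $\tilde X'$. Because $F(xyx')$ contains $aF(y)b$, this map lands in $\Bext_{\mathbf u}(F(y))$. It is surjective because any occurrence of $aF(y)b$ extends, by recurrence, to a factor beginning and ending with $W$, which decodes uniquely. It is injective because the two cycles leave $W$ by distinct right letters and enter it by distinct left letters; this is exactly $\#\Lext(W)=\#\Rext(W)=2$.

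The same map restricted to the first or last coordinate gives bijections $\Lext_{\mathbf z}(y)\to\Lext_{\mathbf u}(F(y))$ and $\Rext_{\mathbf z}(y)\to\Rext_{\mathbf u}(F(y))$, which yields \eqref{eq:extension-transfer}. For equal letters, the step I expect to be the main obstacle is showing that the left letter attached to $X$ equals the right letter attached to $X$. The plan here is to use $X=\overline X$: the letter of $X$ just after its initial $W$ equals the letter just before its final $W$. Hence $(x,x)\mapsto(a,a)$, while $(u,v)$ maps to a pair of distinct letters. This gives \eqref{eq:pal-extension-transfer}, since $F(y)$ is a palindrome when $y$ is, by (ii).

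Finally, if $\#\Bext(W)=4$, then pulling the four extensions back through the bijection at $y=\varepsilon$ shows that $uu$, $uv$, $vu$ and $vv$ all occur in $\mathbf z$.
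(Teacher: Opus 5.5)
Your proposal is correct and follows the paper's proof essentially step for step. You use the same ingredients: unique decoding of factors that begin and end with $W$, the identity $\overline{F(y)}=F(\overline y)$ derived from the palindromicity of $U$ and $V$, and the bijection $(x,x')\mapsto(\alpha_X,\alpha_{X'})$ on bilateral extensions, where $X=\overline X$ forces equal letters to correspond.

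One slip needs fixing. With your convention $X=W\tilde X$, the left letter attached to $X$ is the letter of $X$ just before its final $W$, which by palindromicity is also the first letter of $\tilde X$. It is not the last letter of $\tilde X$: for $m\ge1$ that is the last letter of $W$ for both cycles, and injectivity would fail. For example, $W=a$, $V=aba$ gives $b$ from the correct rule but $a$ from the wrong one. Your injectivity and equal-letter arguments already use the correct description.
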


\begin{proof}
Since $U$ and $V$ are the only $m$-simple return cycles from $W$ to
itself, every factor of $\mathbf u$ that begins and ends with $W$ has a
unique decomposition into these cycles and hence is of the form $F(y)$
for a unique itinerary $y$.
If an itinerary factor $y$ occurs in $\mathbf z$, then $F(y)$ occurs in $\mathbf u$. Since $\mathbf u$ is recurrent, $F(y)$ occurs infinitely often in
$\mathbf u$. By uniqueness of the decomposition into the return cycles $U$ and $V$,
each such occurrence corresponds to an occurrence of $y$ in $\mathbf z$. Thus $\mathbf z$ is recurrent.

For $Y \in\{ \tilde U, \tilde V\}$, palindromicity of $W Y$ gives
\begin{equation}\label{eq:return-commutation}
 W Y=\overline{Y} W.
\end{equation}
Moving $W$ successively to the left by this identity gives, for
$y=y_1\cdots y_s$, if we write $\Phi(y_i) = Y_i \in \{ \tilde U, \tilde V\}$, $i =1 ,\dots, s$, then
$$
\overline{F(y)} = \overline{ W \Phi(y_1) \cdots \Phi(y_s) } = \overline {W Y_1 \cdots Y_s} = 
\overline{Y_s} \cdots \overline {Y_1} W = W Y_s \cdots Y_1 = F(\overline y).
$$
which proves~\eqref{eq:return-reversal}.  
Since the language of $\mathbf u$ is closed under reversal, $\overline{F(y)}=F(\overline y)$ is a factor of $\mathbf u$; 
uniqueness of the itinerary then shows that $\overline y$ belongs to the language of $\mathbf z$.  
This proves~\textup{(i)} and
\textup{(ii)}.

It remains to verify the extension statement. 
For $X\in\{U,V\}$, let $\alpha_X$ be the letter immediately preceding the suffix $W$ in $X$.
Since $U$, $V$ are palindromes, $\alpha_X$ is the letter immediately following the prefix $W$ of $X$.
Since the two cycles leave $W$ along distinct outgoing edges, one has $\alpha_U \ne \alpha_V$.
Note that $\alpha_X$ is the first and last letter of $X$ if $W=\varepsilon$.
We check
$$\Lext(W) = \Rext(W) = \{ \alpha_U, \alpha_V\}.$$

For letters $a,b\in\{u,v\}$, an occurrence of $ayb$ in $\mathbf z$
corresponds exactly to an occurrence of $F(y)$ preceded by
$\alpha_{F(a)}$ and followed by $\alpha_{F(b)}$.
Uniqueness of the itinerary proves that
\[
 (a,b)\longmapsto(\alpha_{F(a)},\alpha_{F(b)})
\]
is a bijection from $\Bext_{\mathbf z}(y)$ to
$\Bext_{\mathbf u}(F(y))$; its restrictions give bijections of the left
and right extension sets as well.
The bilateral-order identity~\eqref{eq:extension-transfer} follows from
the definition, while preservation of equal boundary letters gives
\eqref{eq:pal-extension-transfer}.  By~\eqref{eq:return-reversal},
$F(y)$ is palindromic whenever $y$ is.

Finally, the same bijection with $y=\varepsilon$ identifies the four
bilateral extensions of $W$ with the four length-two itinerary factors.
\end{proof}

\begin{proposition}
\label{prop:propagation}
Suppose that the Rauzy graph $\Gamma_m (\mathbf u)$ consists of a bispecial palindromic vertex $W$ and two distinct $m$-simple palindromic cycles $U$, $V$ from $W$ to itself. 
We assume that all four bilateral extensions of $W$ occur and let $|U|= k+m$, $|V|= m+\ell$.  
Then
\begin{equation}\label{eq:propagation-window}
 T_{\mathbf u}(n)=0
 \qquad\text{for every }m\le n \le m+k+\ell.
\end{equation}
\end{proposition}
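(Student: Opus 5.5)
We argue by induction on $n$, using the variation formula~\eqref{eq:T-variation}. For the base case, Lemma~\ref{lem:unit} applies. Indeed, $\Delta p(m)=1$ holds because the Rauzy graph has a single right-special vertex with two outgoing edges. Moreover, $p(m)=k+\ell-1$, and this is at most $2m+2$ under the palindromic hypotheses. In any case, the tree criterion Proposition~\ref{prop:tree-criterion} applies directly. The reduced graph $\Gamma'_m$ has one vertex $[W]$ and two loops $U,V$, both palindromes, so $T_{\mathbf u}(m)=0$. It then suffices to prove that, for each $m\le n< m+k+\ell$, the right-hand side of~\eqref{eq:T-variation} vanishes. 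That is, we must show
\[
\sum_{w\in\Lcal_n}b(w)=\sum_{w\in\Lcal_n,\ w=\overline w}\bigl(\#\Pext(w)-1\bigr).
\]

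First we localize both sums to bispecial factors containing $W$. Nonbispecial factors have $b=0$. A palindrome $w$ with $\#\Pext(w)\ge2$ is itself bispecial. For $n\ge m$, any bispecial factor $w$ of length $n$ has its length-$m$ prefix left special and its length-$m$ suffix right special. Both must therefore equal $W$, so $w$ begins and ends with $W$. By the unique decomposition in the proof of Lemma~\ref{lem:extension-transfer}, $w=F(y)$ for a unique itinerary $y$, and $|F(y)|=m+k\,|y|_u+\ell\,|y|_v$. By Lemma~\ref{lem:extension-transfer}(ii),(iii), $b_{\mathbf u}(F(y))=b_{\mathbf z}(y)$. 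Also, $F(y)$ is a palindrome exactly when $y$ is a palindrome, and in that case $\#\Pext_{\mathbf u}(F(y))=\#\Pext_{\mathbf z}(y)$. The converse direction, that $F(y)$ palindromic forces $y$ palindromic, follows from $\overline{F(y)}=F(\overline y)$ together with uniqueness of itineraries. So the identity reduces to
\[
\sum_{y:\,|F(y)|=n}b_{\mathbf z}(y)=\sum_{y=\overline y:\,|F(y)|=n}\bigl(\#\Pext_{\mathbf z}(y)-1\bigr),
\]
where the sums range over factors $y$ of $\mathbf z$.

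Now use the length window. If $y$ contains both letters $u$ and $v$, then $|F(y)|\ge m+k+\ell>n$. Hence every relevant $y$ is $u^r$ or $v^r$. Lemma~\ref{lem:unary}, applied to the recurrent reversal-closed word $\mathbf z$ from Lemma~\ref{lem:extension-transfer}(i), gives $b_{\mathbf z}(u^r)=\#\Pext_{\mathbf z}(u^r)-1$, and likewise for $v^r$. Since $u^r$ and $v^r$ are palindromes, the two sums agree term by term. The variation is therefore $0$ for $m\le n\le m+k+\ell-1$, and the induction yields $T_{\mathbf u}(n)=0$ up to and including $n=m+k+\ell$.

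The main point needing care is the localization step, in particular that the correspondence $y\mapsto F(y)$ is a bijection onto the bispecial factors of length $n\ge m$. This needs uniqueness of the return decomposition and the fact that the only special vertex of $\Gamma_m$ is $W$. The degenerate case $W=\varepsilon$ ($m=0$) also needs checking; there the extension letters $\alpha_U,\alpha_V$ are the first and last letters of the cycles, and Lemma~\ref{lem:extension-transfer} already covers it. The hypothesis that all four extensions of $W$ occur is not needed for the cancellation itself, only to ensure the stated structure of $\mathbf z$.
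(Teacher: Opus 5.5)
Your proposal is correct and follows essentially the same route as the paper. It gets $T_{\mathbf u}(m)=0$ from the tree criterion. It then writes every bispecial factor of length $m\le n<m+k+\ell$ as $F(y)$ with $y\in\{u^r,v^r\}$, because $W$ is the only special vertex and mixed itineraries are too long. Finally, it cancels the contributions to~\eqref{eq:T-variation} term by term using Lemma~\ref{lem:unary} transported through Lemma~\ref{lem:extension-transfer}.

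Two small repairs are needed.

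First, drop the aside that $p_{\mathbf u}(m)=k+\ell-1\le 2m+2$ follows from the palindromic hypotheses. It does not hold in general, and you do not need it. For example, $W=aa$, $U=aabcdcbaa$, $V=aaa$ is consistent with the hypotheses and gives $p(2)=7>6$.

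Second, in the localization step, showing that a palindrome with $\#\Pext(w)\ge2$ is bispecial only gives $\#\Pext(w)\le1$ for nonbispecial palindromes. That would still allow a contribution of $-1$. You must also note that such a palindrome has \emph{exactly} one palindromic extension. Its unique left and right extension letters coincide by reversal closure, and recurrence then gives $awa\in\Lcal$. The paper states this explicitly.
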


\begin{proof}
By Proposition~\ref{prop:tree-criterion}, we have $T_{\mathbf u}(m)=0$.

Record successive cycles to $W$ as the letters $u$ and $v$, and let $\mathbf z$ be the resulting return itinerary.
Lemma~\ref{lem:extension-transfer} shows that $\mathbf z$ is recurrent and
reversal closed, transfers bilateral and palindromic extensions, and
shows that $uu$, $uv$, $vu$, and $vv$ all occur.

Fix $d$ with $0\le d<k+\ell$, and let $w$ be any bispecial factor of length $m+d$.  
Since $W$ is the unique left-special factor of length $m$, the length-$m$ prefix of $w$ must be $W$.  
Similarly, uniqueness of the right-special factor forces the length-$m$ suffix of $w$ to be $W$.  
Thus $w=F(y)=W\Phi(y)$ for an itinerary word $y$ of $|F(y)| = |W \Phi(y)| = m+d$, where $|\Phi(u)| = |\tilde U|= k$ and $|\Phi(v)| = |\tilde V| = \ell$.  
Since $d<k+\ell$, the word $y$ cannot contain both letters.  
Thus $y=u^r$, $y=v^s$, or $y=\varepsilon$ when $d=0$.

The itinerary $y$ is a palindrome, so
Lemma~\ref{lem:extension-transfer} implies that $w$ is a palindrome and
that
\[
 b_{\mathbf u}(w)=b_{\mathbf z}(y),
 \qquad
 \#\Pext_{\mathbf u}(w)=\#\Pext_{\mathbf z}(y).
\]
Lemma~\ref{lem:unary} shows that the contribution of every bispecial
factor to the right-hand side of~\eqref{eq:T-variation} is zero.  
A nonbispecial factor has bilateral order zero.  
If it is a palindrome, reversal symmetry gives the same unique letter on the left and on the
right, and recurrence then gives exactly one palindromic extension; it again contributes zero.
Therefore
\[
 T_{\mathbf u}(m+d+1)-T_{\mathbf u}(m+d)=0
 \qquad(0\le d<k+\ell).
\]
Starting from $T_{\mathbf u}(m)=0$ proves~\eqref{eq:propagation-window}.
\end{proof}

\begin{remark}
The strict inequality $d<k+\ell$ is the source of the bound $n\le m+k+\ell$.
A mixed itinerary such as $uv$ or $vu$ may first represent a factor
of length $m+k+\ell$.  Such a factor can affect
$T_{\mathbf u}(m+k+\ell+1)-T_{\mathbf u}(m+k+\ell)$, but not the already
established equality $T_{\mathbf u}(m+k+\ell)=0$.
The sharpness examples in Section~\ref{sec:sharpness} exploit exactly this first mixed-return obstruction.
\end{remark}

We first retain the actual value
$q=p_{\mathbf u}(m)$ and obtain an adaptive coefficient.

\begin{theorem}[Adaptive local coefficient]\label{thm:adaptive}
Let $n>m\ge 0$, suppose that
\[
 \Delta p_{\mathbf u}(m)=1,
 \qquad
 \Delta p_{\mathbf u}(j)\ge2\quad(m<j\le n).
\] 
Assume $p_{\mathbf u}(m) \le2m+1$ and let
$$
q:=p_{\mathbf u}(m) \ge m+1,  \qquad C_{m,q} :=\frac{3q}{m+1+q} \ge \frac 32.
$$
If
\begin{equation}\label{eq:adaptive-hyp}
 p_{\mathbf u}(n)\le C_{m,q} \, n+1,
\end{equation}
then $T_{\mathbf u}(n)=0.$
\end{theorem}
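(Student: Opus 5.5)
We plan to reduce the statement to Proposition~\ref{prop:propagation}: it suffices to show that $n\le m+k+\ell$, where $|U|=m+k$ and $|V|=m+\ell$ are the two cycle lengths in $\Gamma_m(\mathbf u)$.

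First we set up the structure. Since $\Delta p(m)=1$ and $\Delta p(m+1)\ge2$ (take $j=m+1\le n$), Lemma~\ref{lem:strong-transition} applies. It shows that the right- and left-special factors of length $m$ coincide in a bispecial palindrome $W$ with $b(W)=1$, so all four bilateral extensions occur. Because $q\le 2m+1\le 2m+2$, Lemma~\ref{lem:unit} shows that the two simple cycles $U,V$ through $W$ are palindromes. Hence $\Gamma_m(\mathbf u)$ has exactly the form required by Proposition~\ref{prop:propagation}. By \eqref{eq:cycle-sum}, $k+\ell=q+1$, so that proposition gives $T(n)=0$ for $m\le n\le m+q+1$.

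Next we do the counting. The hypothesis $\Delta p(j)\ge2$ for $m<j\le n$, together with $\Delta p(m)=1$, gives
\[
p(n)\ge q+1+2(n-m-1)=q+2n-2m-1 .
\]
Combining this with \eqref{eq:adaptive-hyp} yields $q+2n-2m-1\le C_{m,q}n+1$, that is,
\[
(2-C_{m,q})\,n\le 2m+2-q .
\]
Since $q\le 2m+1$, we have $C_{m,q}=3q/(m+1+q)<2$: indeed $3q<2(m+1+q)$ is equivalent to $q<2m+2$. Moreover $2-C_{m,q}=(2m+2-q)/(m+1+q)$. Dividing by this positive quantity gives $n\le m+1+q=m+k+\ell$, which is exactly the window of Proposition~\ref{prop:propagation}.

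The main point needing care is the validity of this division, namely the strict inequality $C_{m,q}<2$, which is where the hypothesis $q\le 2m+1$ enters. The only other thing to check is the reading of the hypothesis when $n=m+1$. That case is covered automatically, since $m+1\le m+k+\ell$.
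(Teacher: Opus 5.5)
Your proposal is correct and follows the paper's proof essentially step for step. The growth bound $p_{\mathbf u}(n)\ge q+2n-2m-1$ combined with \eqref{eq:adaptive-hyp} gives $n\le m+1+q$; Lemmas~\ref{lem:strong-transition} and~\ref{lem:unit} supply the palindromic cycles $U,V$ with $k+\ell=q+1$; and Proposition~\ref{prop:propagation} finishes. Your explicit checks that $2-C_{m,q}=(2m+2-q)/(m+1+q)>0$ and that $j=m+1$ lies in the hypothesized range only spell out what the paper leaves implicit.
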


\begin{proof}
Using $\Delta p_{\mathbf u}(m)=1$ and the subsequent lower bounds gives
\begin{equation}\label{eq:growth-lower}
 p_{\mathbf u}(n)
 =p_{\mathbf u}(m)+\Delta p_{\mathbf u}(m)+\sum_{j=m+1}^{n-1}\Delta p_{\mathbf u}(j)
 \ge q + 1 +2(n-m-1) = q + 2n - 2m -1.
\end{equation}
Together with~\eqref{eq:adaptive-hyp}, this yields
\begin{equation*}
 \bigl(2- C_{m,q} \bigr)n
 = \frac{2m+2-q}{m+1+q}n 
 \le 2m+2-q.
\end{equation*}
Since $q\le2m+1$, 
we have 
\begin{equation}\label{eq:n-window}
 n\le m+1+q.
\end{equation}

Lemma~\ref{lem:strong-transition}, together with
Lemma~\ref{lem:unit}, supplies the two palindromic $m$-simple cycles
$U,V$ with $|U|=m+k$, $|V|=m+\ell$, and $k+\ell=q+1$.
Hence $n\le m+k+\ell$, and Proposition~\ref{prop:propagation} gives $T_{\mathbf u}(n)=0$.
\end{proof}

\begin{proof}[Proof of 
Theorem~\ref{thm:main}]
If $n = 0$, then 
$$
\Pal_{\mathbf u}(0) = p_{\mathbf u}(0) = 1, \quad 
\Pal_{\mathbf u}(1) = p_{\mathbf u}(1).
$$
Therefore, we have $T_{\mathbf u}(0) = 0$.

Now, we assume~\eqref{eq:main-hyp} and $n \ge 1$.  
If $\Delta p(n)=1$, then 
$p_{\mathbf u}(n)\le\frac32n+1\le2n+2$, so Lemma~\ref{lem:unit} gives $T_{\mathbf u}(n)=0$.

Suppose that $\Delta p(n)\ge2$.  
There exists $m<n$ with
$\Delta p(m)=1$.  
Indeed, a standard consequence of the Morse--Hedlund theorem~\cite{MorseHedlund} is that an aperiodic word satisfies
\[
 p(j+1)>p(j)
 \qquad\text{for every }j\ge0;
\]
equality at one length would force eventual periodicity.  
Thus
$\Delta p(j)>0$ for every $j\ge0$.  If no preceding increment were one,
then every increment from $0$ through $n-1$ would be at least two,
giving $p(n)\ge2n+1$, contrary to~\eqref{eq:main-hyp}.  
Choose the last such $m$, and put $q=p(m)$.
Then $\Delta p(j)\ge2$ for $m<j\le n$.
Thus, \eqref{eq:main-hyp} and \eqref{eq:growth-lower}  give
\begin{equation}\label{eq:three-halves-numerics}
q + \frac n2 \le 2m + 2.
\end{equation}
For $n\ge1$, equation~\eqref{eq:three-halves-numerics} and the
integrality of $q$ give $q\le2m+1$.
Moreover, \eqref{eq:main-hyp} implies
$$
p_{\mathbf u}(n) \le \frac 32 n+1 \le C_{m,q} \, n+1.
$$
Hence, by Theorem~\ref{thm:adaptive}, we deduce 
$T_{\mathbf u}(n) = 0. $
\end{proof}

The coefficient $3/2$ is the largest universal coefficient that always places $n$ before the first
possible mixed $U,V$ itinerary.

\section{Sharpness at the first mixed return}
\label{sec:sharpness}

The proof of Theorem~\ref{thm:main} reaches the first length at
which a bispecial path can use both $U$ and $V$.  This is not an artifact
of the method.  At the following variation step, the first
mixed-return scale can create a cycle in the reduced Rauzy graph and a
positive local gap.
We first display the smallest member
of the sharpness family, and then perform the general enumeration.

Let $\mathbf t=0110100110010110\cdots$ be the Thue--Morse word, the
unique infinite fixed point beginning in $0$ of the morphism
\[
 \tau(0)=01,
 \qquad
 \tau(1)=10.
\]
For an integer $M\ge1$, define a morphism $h_M$, extended from letters to
words by concatenation, by
\begin{equation}\label{eq:hM}
 h_M(0)=a,
 \qquad
 h_M(1)=a^M b,
 \qquad
 \mathbf u_M=h_M(\mathbf t),
\end{equation}
and put
\begin{equation}\label{eq:NM}
 N_M:=2M+3.
\end{equation}

\subsection{The seed example and its Rauzy graph}

Take $M=1$ in~\eqref{eq:hM}, so that
$h_1(0)=a$, $h_1(1)=ab$, and consider length $N_1=5$.  Direct factor
enumeration gives
\[
 p_{\mathbf u_1}(5)=9,
 \qquad
 p_{\mathbf u_1}(6)=13,
 \qquad
 \Delta p_{\mathbf u_1}(5)=4.
\]
The palindromes of length five are
\[
 aabaa,\qquad ababa,\qquad baaab,
\]
and the only palindrome of length six is $abaaba$.  Hence
\[
 T_{\mathbf u_1}(5)=4+2-3-1=2.
\]

This example also exposes the $U,V$ mechanism without any asymptotics.
The last $m<5$ with $\Delta p(m)=1$ is $m=1$, the common vertex is
$W=a$, and the two appended return blocks may be chosen as
\[
 \widetilde U=a,\qquad \widetilde V=ba.
\]
Thus $U=W\widetilde U=aa$ and $V=W\widetilde V=aba$ are
palindromes, while $k=|\widetilde U|=1$ and
$\ell=|\widetilde V|=2$.  Proposition~\ref{prop:propagation} guarantees
$T_{\mathbf u_1}(N)=0$ only through $N=m+k+\ell=4$.  
At relative length
$k+\ell=3$, the first mixed itineraries give
\[
 W\widetilde U\widetilde V=aaba,
 \qquad
 W\widetilde V\widetilde U=abaa.
\]
These are reversal-paired bispecial factors, each of bilateral order one.  
Every other length-$4$ factor has bilateral order zero, and
the sole length-$4$ palindrome $baab$ has exactly one palindromic extension.  
Hence the variation formula shows that the two mixed factors
are the only uncancelled contributions and gives
$T_{\mathbf u_1}(5)-T_{\mathbf u_1}(4)=2$, which agrees with the direct count above.

Figure~\ref{fig:seed-rauzy}(a) gives the complete length-$5$ Rauzy graph.
The shaded vertices are palindromes.  Its special vertices fall into the
four reversal classes
\[
 X=[aaaba,abaaa],\quad Y=[aabaa],\quad
 Z=[aabab,babaa],\quad Q=[abaab,baaba].
\]
After nonspecial vertices are suppressed, reversal classes are
identified, and reversal loops are deleted, the reduced Rauzy graph is the square in Figure~\ref{fig:seed-rauzy}(b).  
The fixed loops deleted at $X,Z,Q$ are palindromic; the failure of the local criterion comes from
the remaining $4$-cycle.  
Thus the graph records precisely the first mixed-return obstruction, while the direct count above gives its exact local gap $T_{\mathbf u_1}(5)=2$.

\begin{figure}[htbp]
\centering
\begin{minipage}[t]{0.65\textwidth}
\centering
\begin{tikzpicture}[
  scale=1,transform shape,
  >={Stealth[length=2.2mm]},
  vertex/.style={draw,rounded corners=1.5pt,inner sep=2.2pt,                 font=\ttfamily\scriptsize,fill=white},
  pal/.style={fill=black!13},
  every edge/.style={draw,->,line width=.42pt}
]
\node[vertex]     (A) at (1,1.5)   {aaaba};
\node[vertex,pal] (B) at (3,1.5) {aabaa};
\node[vertex]     (C) at (0,0)  {aabab};
\node[vertex]     (D) at (5,1.5)   {abaaa};
\node[vertex]     (E) at (4,0)   {abaab};
\node[vertex,pal] (F) at (3,-1.2)   {ababa};
\node[vertex,pal] (G) at (3,2.7)   {baaab};
\node[vertex]     (H) at (2,0)   {baaba};
\node[vertex]     (I) at (6,0)   {babaa};

\path (A) edge (B) edge (C)
      (B) edge (D) edge (E)
      (C) edge (F)
      (D) edge (G)
      (E) edge (H)
      (F) edge (I);
\draw[->] (G) to (A);
\draw[->] (H) to (B);
\draw[->] (H) to (C);
\draw[->] (I) to (D);
\draw[->] (I) to (E);
\end{tikzpicture} \\
\centering
{\small (a) The Rauzy graph $\Gamma_5$.}
\end{minipage}\hfill
\begin{minipage}[t]{0.30\textwidth}
\centering
\begin{tikzpicture}[
  >={Stealth[length=2.2mm]},
  qvertex/.style={draw,circle,minimum size=7mm,inner sep=1pt,
                  font=\small},
  qedge/.style={draw,line width=.75pt}
]
\node[qvertex] (X) at (0,2.4) {$X$};
\node[qvertex] (Y) at (2.4,2.4) {$Y$};
\node[qvertex] (Z) at (0,0) {$Z$};
\node[qvertex] (Q) at (2.4,0) {$Q$};
\draw[qedge] (X)--(Y)--(Q)--(Z)--(X);
\end{tikzpicture} \\
\centering
{\small (b) The reduced Rauzy graph $\Gamma'_5$.}
\end{minipage}
\caption{The first mixed-return obstruction for $\mathbf u_1$ at length $5$.  
The reduced graph contains one cycle and is not a tree.}
\label{fig:seed-rauzy}
\end{figure}

For reference, the solid arrows in Figure~\ref{fig:seed-rauzy}(a) are
the following adjacency relations:
\[
\begin{array}{rcl@{\qquad}rcl}
aaaba&\longrightarrow&aabaa,aabab & aabaa&\longrightarrow&abaaa,abaab,\\
aabab&\longrightarrow&ababa       & abaaa&\longrightarrow&baaab,\\
abaab&\longrightarrow&baaba       & ababa&\longrightarrow&babaa,\\
baaab&\longrightarrow&aaaba       & baaba&\longrightarrow&aabaa,aabab,\\
babaa&\longrightarrow&abaaa,abaab.&&&
\end{array}
\]

\subsection{The stretched Thue--Morse family}

\begin{lemma}\label{lem:family-properties}
For every $M\ge1$, the word $\mathbf u_M$ is binary, uniformly recurrent,
aperiodic, and its language is closed under reversal.
\end{lemma}

\begin{proof}
We use the standard facts that the Thue--Morse word is uniformly
recurrent and aperiodic, and that its language is closed under reversal;
see~\cite[Chapters~1 and~10]{AlloucheShallit} and~\cite{Lothaire}.
Both letter images under $h_M$ are nonempty, so $h_M$ is nonerasing;
uniform recurrence is therefore preserved and $\mathbf u_M$ is
uniformly recurrent.  Each $b$ in $\mathbf u_M$ is the last letter in
the image of a unique $1$ in $\mathbf t$.  If $r$ zeros lie between two
consecutive $1$'s, then the corresponding $b$'s are separated by
$M+1+r$ positions.  Eventual periodicity of $\mathbf u_M$ would make
this $b$-gap sequence eventually periodic.  Subtracting $M+1$ recovers
the successive zero-run lengths $r_1,r_2,\ldots$ between the $1$'s.
An eventually periodic sequence of these zero-run lengths would
reconstruct an eventually periodic tail
$1\,0^{r_1}\,1\,0^{r_2}\,1\,0^{r_3}\cdots$ of $\mathbf t$,
contradicting the aperiodicity of the Thue--Morse word.

For reversal closure, define
\[
 \widehat h_M(0)=a,
 \qquad
 \widehat h_M(1)=ba^M.
\]
For every finite word $v$,
\begin{equation}\label{eq:conjugate-morphisms}
 h_M(v)a^M=a^M\widehat h_M(v),
 \qquad
 \overline{h_M(v)}=\widehat h_M(\overline v).
\end{equation}
Passing to longer prefixes in the first identity shows that
$\widehat h_M(\mathbf t)$ is obtained from $h_M(\mathbf t)$ by
deleting its first $M$ letters.  Uniform recurrence therefore gives the
same factor language
for the two morphic images.  If $w$ is a factor of $h_M(\mathbf t)$,
choose a factor $v$ of $\mathbf t$ whose image contains $w$.  Reversal
closure of the Thue--Morse language and the second identity show that
$\overline w$ is a factor of $\widehat h_M(\mathbf t)$, hence of
$\mathbf u_M$.  This proves reversal closure.
\end{proof}

\begin{lemma}[Thue--Morse gap catalogue]\label{lem:tm-gaps}
Let $r$ denote the number of zeros between two consecutive occurrences
of $1$ in $\mathbf t$.  Then $r\in\{0,1,2\}$.  Two consecutive gap
parameters are unequal, and every ordered unequal pair occurs.
\end{lemma}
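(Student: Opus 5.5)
The plan is to reduce everything to the factor structure of the Thue--Morse word and, in particular, to its overlap-freeness and the shape of its factors of length three and four. Gaps between consecutive occurrences of $1$ are encoded by factors of the form $10^r1$ with no $1$ inside the zero block. The proof has three parts: bound $r$, show that consecutive gaps differ, and show that every ordered unequal pair occurs.

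\emph{Bounding $r$.} Since $\mathbf t$ is overlap-free, it contains no cube, so $000$ is not a factor and $r\le2$. This can also be seen directly. The word $\mathbf t=\tau(\mathbf t)$ is a concatenation of the blocks $01$ and $10$, aligned at even positions. A run of three equal letters would have to contain a whole aligned block, and each aligned block contains both letters, which is a contradiction.

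\emph{Consecutive gaps are unequal.} Suppose two consecutive gaps were both $r$. Then $10^r10^r1$ would be a factor. This word has the form $xxa$ with $x=10^r$ and $a=1$ the first letter of $x$, so it is an overlap. That contradicts the overlap-freeness of $\mathbf t$, which I take as a standard fact; see~\cite{AlloucheShallit,Lothaire}. The cases $r=0,1,2$ correspond to the overlaps $111$, $10101$ and $1001001$.

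\emph{Every ordered unequal pair occurs.} There are six words $10^r10^s1$ with $r\ne s$ and $r,s\in\{0,1,2\}$. I would exhibit each one explicitly in a short prefix of $\mathbf t$. Recurrence of $\mathbf t$ then gives infinitely many occurrences, although for the catalogue a single occurrence is enough. The first $32$ letters are
\[
\mathbf t=0110100110010110\,1001011001101001\cdots.
\]
Reading this prefix, I would locate the six pairs:
\begin{itemize}
\item $(0,1)$: the word $1101$ at positions $1$--$4$;
\item $(1,0)$: the word $1011$, which occurs within $\cdots0101101\cdots$;
\item $(1,2)$: the word $101001$ at positions $2$--$7$;
\item $(2,0)$: the word $10011$ at positions $4$--$8$;
\item $(0,2)$: the word $11001$ at positions $7$--$11$;
\item $(2,1)$: the word $100101$ at positions $8$--$13$.
\end{itemize}
The only step that needs care is checking each occurrence against the actual prefix, in particular $(1,0)$, which requires the block $10110$. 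That block occurs at positions $11$--$15$ inside $0110\,1001$. Alternatively, every such pair can be obtained from reversal closure and the $\tau$-structure. Once $(0,1)$, $(1,2)$ and $(2,0)$ are found, reversal closure of the Thue--Morse language yields the reversed pairs $(1,0)$, $(2,1)$ and $(0,2)$. This is the cleanest route, and it is the one I will present.
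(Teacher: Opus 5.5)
Your proof is correct and follows essentially the same route as the paper: overlap-freeness rules out $000$ and the overlaps $111$, $10101$, $1001001$, and explicit Thue--Morse factors witness the six ordered unequal pairs, with reversal closure giving $(1,0)$, $(2,1)$, $(0,2)$ from $(0,1)$, $(1,2)$, $(2,0)$ as a harmless shortcut. One small slip: the occurrence of $10110$ at positions $11$--$15$ lies inside $1001\,0110$, not inside $0110\,1001$, although the positions you give are correct.
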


\begin{proof}
It is classical that the Thue--Morse word $\mathbf t$ is overlap-free;
see, for example,~\cite[Chapter~1]{AlloucheShallit}.
Recall that an \emph{overlap} is a finite word of the form $cxcxc$,
where $c$ is a letter and $x$ is a possibly empty word.  Thus an
infinite word is overlap-free if none of its factors has this form.

Since $000=0\varepsilon0\varepsilon0$ is an overlap, at most two zeros
can occur between consecutive occurrences of $1$ in $\mathbf t$.
Equal consecutive gap parameters would produce, for $r=0,1,2$,
respectively,
\[
 111,\qquad 10101,\qquad 1001001,
\]
each of which is an overlap.  Hence consecutive parameters are unequal.
Conversely,
the six ordered unequal pairs
\[
 (0,1),\ (1,0),\ (0,2),\ (2,0),\ (1,2),\ (2,1)
\]
are witnessed, respectively, by the Thue--Morse factors
\[
 1101,\quad 1011,\quad 11001,\quad 10011,
 \quad 101001,\quad 100101. \qedhere
\]
\end{proof}

We give the factor enumeration because the additive constant is relevant
for sharpness.

\begin{proposition}[Exact count at the critical scale]
\label{prop:sharp-count}
For $N_M=2M+3$,
\begin{align}
 p_{\mathbf u_M}(N_M)&=3M+6,
 &p_{\mathbf u_M}(N_M+1)&=3M+10,
 \label{eq:sharp-p}\\
 \Pal_{\mathbf u_M}(N_M)+\Pal_{\mathbf u_M}(N_M+1)&=4.
 \label{eq:sharp-pal}
\end{align}
Consequently
\begin{equation}\label{eq:sharp-gap}
 \Delta p_{\mathbf u_M}(N_M)=4,
 \qquad
 T_{\mathbf u_M}(N_M)=2.
\end{equation}
\end{proposition}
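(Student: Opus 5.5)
The plan is a direct enumeration. I would encode $\mathbf u_M$ by its $b$-gaps and classify factors of length $N_M$ and $N_M+1$ by how many $b$'s they contain. By the gap description in the proof of Lemma~\ref{lem:family-properties}, $\mathbf u_M$ has the form
\[
 \cdots\, b\,a^{M+r_1}\,b\,a^{M+r_2}\,b\,a^{M+r_3}\cdots,
\]
where $(r_i)$ is the sequence of zero-run lengths between consecutive $1$'s of $\mathbf t$. By Lemma~\ref{lem:tm-gaps}, $r_i\in\{0,1,2\}$, $r_i\ne r_{i+1}$, and all six ordered unequal pairs occur. Every factor is then $a^i\,b\,a^{M+s_1}\,b\cdots b\,a^j$, where $s_1,s_2,\ldots$ is a factor of $(r_i)$, $i\le M+s_0$, $j\le M+s_{t}$, and $s_0,s_t$ are the neighbouring gaps. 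A factor is determined by this data, with $i,j$ truncated accordingly.

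\textbf{Length $N_M$.} Let $N=N_M=2M+3$.
\begin{enumerate}[label=(\alph*)]
 \item No factor is $b$-free, since $a^N$ would need a run of length $2M+3>M+2$.
 \item No factor contains three $b$'s: $b\,a^{M+r}\,b\,a^{M+s}\,b$ has length $2M+3+r+s$, and $r=s=0$ is forbidden.
 \item One-$b$ factors are $a^i b a^{N-1-i}$ with $i\le M+2$ and $N-1-i\le M+2$, so $i\in\{M,M+1,M+2\}$. Each is realized because the required unequal gap pair exists (for $i=M+1$ use $(1,2)$). This gives $3$ factors.
 \item Two-$b$ factors are $a^i\,b\,a^{M+r}\,b\,a^j$ with $i+j=M+1-r$. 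For each $r\in\{0,1,2\}$, I would count the pairs $(i,j)$ for which some occurring gap triple $(s_0,r,s_2)$ has $i\le M+s_0$ and $j\le M+s_2$. Only the extreme values $i=M+1-r$ (so $j=0$), or $j$ near $M+1-r$, can hit the constraint. Since $i,j\le M+1$, the constraint only bites when $i$ or $j$ equals $M+1$, which requires the neighbouring gap to be at least $1$.
\end{enumerate}
Adding up should give $3+(M+2)+(M+1)+M=3M+6$, up to the boundary exclusions that the triple check settles.

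\textbf{Length $N_M+1$.} I would repeat the same classification at length $N+1$. Now $b\,a^M\,b\,a^{M+1}\,b$ and its reverse, which have length $2M+4$, are the only three-$b$ factors, and they require the gap pairs $(0,1)$ and $(1,0)$. The one-$b$ factors need $i\in\{M+1,M+2\}$ with both sides at most $M+2$. The two-$b$ factors satisfy $i+j=M+2-r$. The target total is $3M+10$.

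\textbf{Main obstacle.} The hardest step is the triple catalogue. I need to know exactly which gap triples $(s_0,r,s_2)$ occur, beyond Lemma~\ref{lem:tm-gaps}, because boundary cases such as $j=M+1$ with $r=0$ require $s_2\ge1$ together with $s_0\ne0$. To get it, I would first list the Thue--Morse factors $1\,0^{s_0}\,1\,0^{r}\,1\,0^{s_2}\,1$ of length at most $10$, reading them off $\tau^4(0)$ and $\tau^4(1)$. Overlap-freeness then rules out the forbidden patterns, for instance $(0,2,0)$ would give $1100111$, which contains an overlap only in some cases. I would check each case explicitly.

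\textbf{Palindromes.} The palindromes are found by the same classification. At length $N$:
\begin{enumerate}[label=(\alph*)]
 \item among one-$b$ factors only $a^{M+1}ba^{M+1}$ is a palindrome;
 \item among two-$b$ factors, $a^i\,b\,a^{M+r}\,b\,a^i$ needs $2i=M+1-r$, so exactly one parity class of $r$ contributes, and that gives one or two palindromes.
\end{enumerate}
At length $N+1$, only $i=j$ with $2i=M+2-r$ can be palindromic; the three-$b$ factors $b\,a^M\,b\,a^{M+1}\,b$ and $b\,a^{M+1}\,b\,a^M\,b$ are not palindromes. I expect the total over both lengths to come to exactly $4$, consistent with the $M=1$ seed example ($aabaa$, $ababa$, $baaab$, $abaaba$). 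Then \eqref{eq:sharp-gap} follows from $\Delta p=4$ and $T=4+2-4=2$.
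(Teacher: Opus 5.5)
Your route is the paper's: classify the factors of lengths $2M+3$ and $2M+4$ by their number of $b$'s, and count placements for each gap parameter $r$. As written, however, the proposal does not establish the counts. At length $N_M$ you stop at ``should give $3M+6$, up to the boundary exclusions that the triple check settles.'' At $N_M+1$ you only state the target $3M+10$. The step you call the main obstacle, a catalogue of gap triples $(s_0,r,s_2)$, is not needed. The one concrete triple claim you make is also wrong: $(0,2,0)$ is not forbidden, since $110011$ (not $1100111$) is a factor of $\tau^4(1)=1001011001101001$. Likewise, ``$j=M+1$ with $r=0$ requires $s_2\ge1$ together with $s_0\ne0$'' needs nothing beyond Lemma~\ref{lem:tm-gaps}: both neighbours of a $0$-gap are automatically nonzero, and when $j=M+1$ at length $N_M$ one has $i=0$, so $s_0$ is irrelevant.

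The missing observation closes the argument. A two-$b$ factor $a^iba^{M+r}ba^j$ occurs if and only if some occurrence of gap $r$ has neighbours with $i\le M+s_0$ and $j\le M+s_2$. Such a constraint can bind only when $i\ge M+1$ or $j\ge M+1$.

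\textbf{Length $N_M$.} Here $i+j=M+1-r$, so a binding constraint forces $r=0$ and $\{i,j\}=\{0,M+1\}$. Then $s_0,s_2\ne0$ holds automatically. Hence all $M+2-r$ placements occur, giving exactly $3M+3$ two-$b$ factors with no exclusions.

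\textbf{Length $N_M+1$.} Here $i+j=M+2-r<2M+2$, so at most one side can bind. This is the paper's remark that at most one side of the window can admit a third $b$. The only cases not settled by $s\ne r$ alone are $r=0$ with $i$ or $j$ equal to $M+2$, and $r=1$ with $i$ or $j$ equal to $M+1$. These are realized by the ordered pairs $(2,0),(0,2),(2,1),(1,2)$ of Lemma~\ref{lem:tm-gaps}. So all $M+3-r$ placements occur, giving $3M+6$ two-$b$ factors. Adding the two one-$b$ and the two three-$b$ factors gives $3M+10$.

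\textbf{Palindromes.} Finish this count rather than ``expecting'' $4$. Two-$b$ palindromes need $r\equiv M+1\pmod 2$ at length $N_M$ and $r\equiv M\pmod 2$ at $N_M+1$. These are complementary parity classes in $\{0,1,2\}$, so together they contribute exactly three palindromes. Add the single one-$b$ palindrome $a^{M+1}ba^{M+1}$ at $N_M$. There are none at $N_M+1$: one-$b$ factors there have $x+y=2M+3$ odd, and the three-$b$ factors are a reversal pair. The total is $4$, which completes the proof.
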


\begin{proof}
Between two consecutive letters $b$ in $\mathbf u_M$ there are
$M+r$ letters $a$, where $r\in\{0,1,2\}$ is the number of zeros between
the corresponding consecutive $1$'s of $\mathbf t$.  Thus the distance
between the two $b$'s is $M+1+r$.  Lemma~\ref{lem:tm-gaps} gives the
complete list of one- and two-gap possibilities needed below.

Every factor of length $N_M$ or $N_M+1$ is therefore determined by the
positions of its $b$'s.  There is no factor without $b$, because an
$a$-run has length at most $M+2$.  Nor can four $b$'s occur: three
successive $b$-distances have span at least $3(M+1)+1>2M+4$, because
consecutive gap parameters are unequal.  The following table counts the factors with one,
two, or three occurrences of $b$; the number of palindromes in each class
is shown in parentheses.  Put $\epsilon_M=1$ if $M$ is odd and
$\epsilon_M=0$ if $M$ is even.
\begin{center}
\begin{tabular}{c@{\qquad}c@{\qquad}c@{\qquad}c@{\qquad}c}
\toprule
length & one $b$ & two $b$'s & three $b$'s & total \\
\midrule
$2M+3$ & $3\ (1)$ & $3M+3\ (1+\epsilon_M)$ & $0\ (0)$
        & $3M+6\ (2+\epsilon_M)$ \\
$2M+4$ & $2\ (0)$ & $3M+6\ (2-\epsilon_M)$ & $2\ (0)$
        & $3M+10\ (2-\epsilon_M)$ \\
\bottomrule
\end{tabular}
\end{center}

For completeness, the entries are obtained as follows.  A factor with
one $b$
has the form $a^xba^y$.  Since the neighboring $a$-runs have length at
most $M+2$, the conditions $x+y=2M+2$ and $x+y=2M+3$ give respectively
three and two possibilities.  More explicitly, the pairs $(x,y)$ are
\[
 (M,M+2),(M+1,M+1),(M+2,M)
\]
at length $2M+3$, and
\[
 (M+1,M+2),(M+2,M+1)
\]
at length $2M+4$.  The six ordered unequal gap pairs in
Lemma~\ref{lem:tm-gaps} realize each of these configurations without
introducing another $b$.

A factor with two $b$'s has internal
distance $d=M+1+r$.  Its first $b$ can occupy
\[
 M+2-r \quad\text{positions at length }2M+3,
 \qquad
 M+3-r \quad\text{positions at length }2M+4.
\]
Summing over $r=0,1,2$ gives $3M+3$ and $3M+6$.  At length
$2M+3$ no third $b$ can enter, so every placement occurs around any
realization of the gap $r$.  At length $2M+4$, at most one side of the
window can admit a third $b$.  Choosing on that side a neighboring gap
parameter unequal to $r$ and large enough to place the next $b$ outside
the window is always possible by Lemma~\ref{lem:tm-gaps}: choose $2$
when $r=0$ or $1$, while either unequal value works when $r=2$.
Thus the displayed placement counts are exact.
Three $b$'s require two unequal gap parameters.  At length $2M+3$ none
fit; at length $2M+4$ exactly $(0,1)$ and $(1,0)$ fit, giving two
factors.

A one-$b$ factor is palindromic exactly when $x=y$.  For a two-$b$
factor, palindromicity likewise says that the two exterior $a$-runs have
equal length.  At length $2M+3$ this is possible for one value of $r$ if
$M$ is even and for two values if $M$ is odd; at length $2M+4$ the counts
are reversed.  The two three-$b$ factors are reversals of one another and
are not palindromes.  This proves the table and
\eqref{eq:sharp-p}--\eqref{eq:sharp-pal};
\eqref{eq:sharp-gap} follows immediately.
\end{proof}

\begin{proof}[Proof of sharpness in Theorem~\ref{thm:main}]
By~\eqref{eq:sharp-p} and~\eqref{eq:NM},
\begin{equation}\label{eq:ratio-sharp}
 \frac{p_{\mathbf u_M}(N_M)-1}{N_M}
 =\frac32+\frac{1}{4M+6}.
\end{equation}
Given $c>3/2$, choose $M$ so large that
$1/(4M+6)<c-3/2$.  Equation~\eqref{eq:ratio-sharp} gives
$p_{\mathbf u_M}(N_M)\le cN_M+1$, while
 Proposition~\ref{prop:sharp-count} gives
\[
 \Pal(N_M)+\Pal(N_M+1)=4<6=\Delta p(N_M)+2.
\]
Lemma~\ref{lem:family-properties} supplies all required dynamical and
symmetry properties.
\end{proof}

\section{\texorpdfstring{Consequences}{Consequences}}
\label{sec:applications}

\subsection{Quasi-Sturmian words and finite defect}

An aperiodic word is \emph{quasi-Sturmian} if there are constants $d$ and
$n_0$ such that
\begin{equation}\label{eq:quasi-sturmian}
 p_{\mathbf u}(n)=n+d
 \qquad(n\ge n_0).
\end{equation}

Recall that the palindromic defect of a finite word $w$ is
\[
 D(w)=|w|+1-\#\{\text{distinct palindromic factors of }w\},
\]
where the empty word is counted, and
$D(\mathbf u)=\sup\{D(w):w\in\Lcal(\mathbf u)\}$.  The
Brlek--Reutenauer formula, proved in full generality in~\cite{BPSproof},
states that, for an infinite word with reversal-closed language,
\begin{equation}\label{eq:BR}
 2D(\mathbf u)=\sum_{n\ge0}T_{\mathbf u}(n),
\end{equation}
with equality in $[0,+\infty]$.

\begin{corollary}[Quasi-Sturmian consequences]
\label{cor:quasi-pal}
Let $\mathbf u$ be a recurrent aperiodic quasi-Sturmian word whose
language is closed under reversal.  Then
\begin{equation}\label{eq:quasi-pal}
 \Pal_{\mathbf u}(n)+\Pal_{\mathbf u}(n+1)=3
\end{equation}
for every sufficiently large $n$.  Consequently,
$D(\mathbf u)<\infty$.
\end{corollary}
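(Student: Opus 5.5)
For $n\ge n_0$ the quasi-Sturmian hypothesis \eqref{eq:quasi-sturmian} gives $p_{\mathbf u}(n)=n+d$, hence $\Delta p_{\mathbf u}(n)=1$ for all $n\ge n_0$. The plan is to show $T_{\mathbf u}(n)=0$ for all large $n$, which gives \eqref{eq:quasi-pal} directly from the definition \eqref{eq:T-def}:
\[
 \Pal_{\mathbf u}(n)+\Pal_{\mathbf u}(n+1)=\Delta p_{\mathbf u}(n)+2-T_{\mathbf u}(n)=1+2-0=3 .
\]
Finite defect will then follow from the Brlek--Reutenauer formula \eqref{eq:BR}.

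To get $T_{\mathbf u}(n)=0$ I will invoke the last assertion of Lemma~\ref{lem:unit}. It applies at any length $m$ with $\Delta p_{\mathbf u}(m)=1$ and $p_{\mathbf u}(m)\le 2m+2$. For $n\ge n_0$ the first condition holds as just noted. The second condition reads $n+d\le 2n+2$, which holds as soon as $n\ge d-2$. So for every $n\ge\max(n_0,d-2)$, Lemma~\ref{lem:unit} yields $T_{\mathbf u}(n)=0$, and \eqref{eq:quasi-pal} follows. (Alternatively one could invoke Theorem~\ref{thm:main}, since $n+d\le\frac32 n+1$ for large $n$, but Lemma~\ref{lem:unit} is more direct.) No recurrence or aperiodicity beyond the standing hypotheses of Section~\ref{sec:preliminaries} is needed.

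For the defect, \eqref{eq:BR} gives $2D(\mathbf u)=\sum_{n\ge0}T_{\mathbf u}(n)$. The terms vanish for $n\ge N:=\max(n_0,d-2)$, so the sum is a finite sum of finitely many terms. Each term is finite, since $T_{\mathbf u}(n)\le\Delta p_{\mathbf u}(n)+2\le p_{\mathbf u}(n+1)+2$. Hence $D(\mathbf u)<\infty$.

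I do not expect a genuine obstacle. The only points to check are that the hypotheses of Lemma~\ref{lem:unit} are exactly the standing assumptions of Sections~\ref{sec:preliminaries}--\ref{sec:propagation} (recurrent, aperiodic, reversal-closed), which match the corollary, and that \eqref{eq:BR} is applicable to reversal-closed infinite words as cited from \cite{BPSproof}.
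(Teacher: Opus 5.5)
Your proof is correct and is essentially the paper's. The paper invokes Theorem~\ref{thm:main} through $n+d\le\frac32 n+1$, but when $\Delta p_{\mathbf u}(n)=1$ that theorem's proof reduces to Lemma~\ref{lem:unit}, which you apply directly, needing only the weaker threshold $n\ge d-2$. Your finite-defect step via \eqref{eq:BR} is identical to the paper's.
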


\begin{proof}
For large $n$, equation~\eqref{eq:quasi-sturmian} gives
$p_{\mathbf u}(n)\le3n/2+1$ and $\Delta p_{\mathbf u}(n)=1$.  
Theorem~\ref{thm:main} therefore
gives~\eqref{eq:quasi-pal}, and hence $T_{\mathbf u}(n)=0$ for all sufficiently
large $n$.  The nonnegative series in~\eqref{eq:BR} consequently has
only finitely many nonzero terms, so $D(\mathbf u)<\infty$.
\end{proof}

\subsection{Reflection complexity}

The reflection complexity, introduced by Allouche, Campbell, Li, Shallit, and Stipulanti~\cite{AlloucheEtAl}, is denoted here by $r_{\mathbf u}(n)$;
it is the number of length-$n$ factors modulo the equivalence
$w\sim\overline w$.  When the language is closed under reversal, orbit
counting gives
\begin{equation}\label{eq:reflection-orbits}
 r_{\mathbf u}(n)=\frac{p_{\mathbf u}(n)+\Pal_{\mathbf u}(n)}2.
\end{equation}
When a bi-infinite word $\mathbf x$ is viewed as a coloring of the $2$-regular tree, its radius-$j$ subword complexity in the sense of
\cite{KimLim,KimLeeLimSim} equals $r_{\mathbf x}(2j+1)$.

\begin{remark}[Recovered reflection recurrence]
Under the hypotheses of Corollary~\ref{cor:quasi-pal}, one obtains
\begin{equation}\label{eq:reflection-recurrence}
 r_{\mathbf u}(n+2)=r_{\mathbf u}(n)+1
\end{equation}
for every sufficiently large $n$.  
Indeed, applying
\eqref{eq:quasi-pal} at $n$ and $n+1$ gives
$\Pal_{\mathbf u}(n+2)=\Pal_{\mathbf u}(n)$.  
Also $p_{\mathbf u}(n+2)=p_{\mathbf u}(n)+2$ for large $n$.
Equation~\eqref{eq:reflection-orbits} now gives
\eqref{eq:reflection-recurrence}.

Dvo\v{r}\'akov\'a and Pelantov\'a~\cite[Theorem~21]{DvorakovaPelantova}
proved the stronger equivalence that
$r(n+2)=r(n)+1$ eventually if and only if the word is quasi-Sturmian and
the language of some tail of the infinite word is closed under reversal.
Thus
equation~\eqref{eq:reflection-recurrence} is a special case of their
characterization.
\end{remark}

\end{document}